\newtheorem{thm}{Theorem}[section]
\newtheorem{corollary}[thm]{Corollary}
\newtheorem{lemma}[thm]{Lemma}
\newtheorem*{thm*}{Theorem}
\newtheorem*{corollary*}{Corollary}
\theoremstyle{definition}
\newtheorem*{definition}{Definition}
\newtheorem*{example}{Example}
\newenvironment{remark}{\par\medskip \noindent \textit{Remark.} \rmfamily}{\medskip}
\newcommand{\comW}[1]{\marginpar{{\color{blue} #1}}}
\newcommand{\N}{\mathbb{N}}
\newcommand{\Z}{\mathbb{Z}}
\newcommand{\R}{\mathbb{R}}
\newcommand{\sN}{\mathscr{N}}
\newcommand{\eps}{\varepsilon}
\newcommand{\bba}{\boldsymbol{\alpha}}
\newcommand{\1}{\mathbbm{1}}
\newcommand{\bbR}{{\bf{R}}}
\renewcommand{\phi}{\varphi}
\newcommand{\cN} {{\mathcal N}}
\newcommand{\sD} {{\mathscr D}}
\newcommand{\sE}{\mathscr{E}}
\newcommand{\sA}{\mathscr{A}}
\newcommand{\bR}{\mathbf{R}}
\DeclareMathOperator{\End}{End}
\DeclareMathOperator{\GL}{GL}
\DeclareMathOperator{\id}{id}
\DeclareMathOperator{\Ell}{\mathscr{L}}
\DeclareMathOperator{\Span}{Span}
\DeclareMathOperator{\dist}{d}
\DeclareMathOperator{\rk}{rk}
\renewcommand{\parallel}{\mathbin{/\mkern-5mu/}}
\newcommand{\abs}[1]{\lvert#1\rvert}    
\newcommand{\norm}[1]{\lVert#1\rVert}   
\newcommand{\normbig}[1]{\bigl\lVert#1\bigr\rVert}
\newcommand{\set}[1]{\{\, #1  \,\}}     
\newcommand{\setBig}[1]{\Bigl\{\, #1 \,\Bigr\}}
\newcommand{\llrr}[1]{\llbracket#1\rrbracket}       
\DeclareMathOperator{\Id}{Id}
\DeclareMathOperator{\Gr}{Gr}
\DeclareMathOperator{\Ker}{Ker}
\DeclareMathOperator{\BL}{BL}
\renewcommand{\subset}{\subseteq}
\author{Timoth\'ee B\'enard }
\address{CNRS – LAGA, Universit\'e Sorbonne Paris Nord, 99 avenue J.-B. Cl\'ement, 93430 Villetaneuse}
\email{benard@math.univ-paris13.fr}
\author{Weikun He}
\address{State Key Laboratory of Mathematical Sciences, Academy of Mathematics and System Science, Chines Academy of Sciences, Beijing 100190, China}
\email{heweikun@amss.ac.cn}
\thanks{W.H. is supported by the National Key R\&D Program of China (No. 2022YFA1007500) and the National Natural Science Foundation of China (No. 12288201).}
\subjclass[2020]{Primary 26D15; Secondary 15A45.}
\title{Effective Brascamp-Lieb inequalities}
\begin{document}
\begin{abstract}
We establish an effective upper bound for the Brascamp-Lieb constant associated to a weighted family of linear maps.
\end{abstract}
\maketitle
\large

\setcounter{tocdepth}{2}    
\tableofcontents

\section{Introduction} \label{Sec-intro}
A \emph{Brascamp-Lieb datum} is a tuple
$$\sD=(H, (H_{j})_{j\in J}, (\ell_j)_{j \in J}, (q_{j})_{j \in J}) $$
where $J$ is a finite index set, $H$ and $H_{j}$ are finite dimensional real Hilbert spaces (a.k.a. Euclidean spaces), $\ell_{j} : H \to H_{j}$ are
linear maps from $H$ to $H_{j}$, and $q_{j}>0$ are positive real numbers. Below, we simply write
$$\sD=((\ell_j)_{j \in J}, (q_{j})_{j \in J})$$
and keep implicit the notation $H,H_{j}$.

The \emph{Brascamp-Lieb constant of $\sD$} is the smallest constant $\BL(\sD)\in [0, +\infty]$ such that
\begin{equation}\label{def-BL-cst}
\int_{H} \prod_{j \in J} (f_{j}\circ \ell_{j})^{q_{j}} \leq \BL(\sD) \prod_{j \in J} \left(\int_{H_{j}} f_{j} \right)^{q_{j}},
\end{equation}
for all collections $(f_{j})_{j \in J}$ of measurable functions $f_j  \colon H_{j} \rightarrow \R_{\geq 0}$.

The Brascamp-Lieb inequality \eqref{def-BL-cst} unifies and generalises a number of classical results (corresponding to specific data $\sD$) such as the Cauchy-Schwarz inequality on $L^2(H)$, Hölder's inequality, Young's inequality,  the Loomis-Whitney inequality.
The study of \eqref{def-BL-cst} was initiated by Brascamp and Lieb in \cite{BL76}, and has since  been the subject of extensive research (see \cite{BCCT08} and \cite{Zhang_survey} for a historical perspective).

One of the cornerstones in the study of Brascamp-Lieb inequalities is the finiteness criterion, due to Barthe \cite{Barthe98} in rank one  (i.e. $\dim H_j=1$ for all $j$), see also \cite{CLL04},
and due to Bennett-Carbery-Christ-Tao in arbitrary rank \cite{BCCT08, BCCT10}.
It asserts that $\BL(\sD)$ is finite if and only if the following two conditions hold.
\begin{enumerate}
\item \emph{Global criticality (scaling condition)}:
\begin{equation}\label{eq:gc}
\sum_{j \in J} q_j \dim H_j = \dim H.
\end{equation}
\item \emph{Algebraic perceptivity}\footnote{We justify this terminology at the very end of \Cref{Sec-intro}.}: for every subspace $W \subseteq H$,
\begin{equation}\label{eq:sumrk}
\sum_{j \in J} q_j \dim \ell_j(W) \geq \dim W.
\end{equation}
\end{enumerate}

With this criterion in mind, a natural question arises : when $\BL(\sD)$ is finite, is there a way to determine how large or how small it is?

To the best of our knowledge, explicit values of $\BL(\sD)$ are known in the following sporadic cases:
the Hölder inequality, Young's inequality~\cite{Beckner} and its generalisation\footnote{Brascamp-Lieb~\cite{BL76} considered rank one data and reduced the determination of $\BL(\sD)$ to finding positive solutions to a system of polynomial equations.} in \cite{BL76}, the Loomis-Whitney inequality~\cite{LoomisWhitney} and its generalisation in \cite{Finner}, the geometric Brascamp-Lieb inequality~\cite{Ball, Barthe98} (in the sense of \cite[Definition 2.1]{BCCT08}).

Moreover, a large class (see \cite[Theorem 7.13]{BCCT08}) of Brascamp-Lieb data are equivalent (in the sense of \cite[Definition 3.1]{BCCT08}) to a geometric datum.
If the intertwining transformations can be explicitly determined, then the corresponding Brascamp-Lieb constant is then also explicit (by \cite[Lemma 3.3]{BCCT08}).
This can be utilized, for example, to determine the optimal constant in the affine-invariant Loomis-Whitney inequality.



These observations motivate an approach to estimating general Brascamp–Lieb constants by looking for equivalences with geometric data. This strategy is pursued by Garg–Gurvits–Oliveira–Wigderson~\cite{GGOW}, who develop a time-efficient algorithm for computing a Brascamp–Lieb constant to arbitrary precision under the additional assumption that the datum $\sD$ is rational. In this setting, they derive an upper bound for $\BL(\sD)$ in terms of the rational complexity of $\sD$ \cite[Theorem 1.4]{GGOW}. For further discussion of computational and algorithmic aspects of Brascamp–Lieb inequalities, we refer the reader to~\cite{GGOW}.

A different approach to estimate Brascamp-Lieb constants is given by Gressman~\cite[Lemma 2]{Gressman}.
This approach, using geometric invariant theory, gives for rational weights $(q_j)_{j \in J}$, the existence of rational functions in the variables $(\ell_j)_{j \in J}$ that control $\BL((q_j)_{j \in J}, (\ell_j)_{j \in J})$. These rational functions depends on $(q_j)_{j \in J}$ and are not explicit.

In this paper, we address the problem of estimating Brascamp-Lieb constants from another point of view. We establish effective estimates with a conceptual geometric interpretation. In particular, no rationality assumption is required.

\bigskip
\noindent{\bf Applications and motivation.}
Our geometric perspective has  applications to Kakeya-type problems, see the discussion in \S\ref{Sec-appli}.
We also derive a \emph{visual inequality} for covering numbers, \Cref{thm-visual-ineq}. This inequality is the original motivation for this work. In the companion paper \cite{BH25-walks}, we use it to derive a discretized subcritical projection theorem under optimal geometric assumptions, and ultimately establish effective equidistribution results for random walks on homogeneous spaces. This application aims to make the celebrated work of Benoist-Quint \cite{BQ1} effective.

The strategy of using Brascamp-Lieb inequalities in order to derive a subcritical projection theorem is inspired by Gan \cite{Gan24}.
Therein, Gan considers the continuous (non-discretized) setting and exploits  partially explicit Brascamp-Lieb estimates due to Maldague \cite{Maldague22} to give an upper bound on the Hausdorff dimension of the exceptional set, see \cite[Theorem 1]{Gan24}.
Maldague's work  \cite{Maldague22} is not enough to deduce a discretized projection theorem because of an uncontrolled constant in the upper bound in \cite[Theorem 1.1]{Maldague22}.
Theorems \ref{thm:ub}, \ref{pr:rlub}  below provide a fully explicit upper bound for Brascamp-Lieb constants.
In \cite{BH25-walks}, they will not only provide the tools required to generalize \cite[Theorem 1]{Gan24} to the discretized setting, but also allow for a  more straightforward proof of that result (for example, no need to rely on a multilinear Kakeya upgrade of the Brascamp-Lieb inequalities).

\bigskip

\subsection{Bounds for Brascamp-Lieb constants} \label{Sec-bnd-BL1}
Let  $\sD=((\ell_{j})_{j}, (q_{j})_{j})$ be a Brascamp-Lieb datum.
We  present explicit upper and lower bounds for $\BL(\sD)$ that  refine several known qualitative results, such as the finiteness criterion of Bennett-Carbery-Christ-Tao \cite{BCCT08, BCCT10}, and the local-boundedness result of  Bennett-Bez-Flock-Lee \cite{BBFL18}.

\bigskip

We start by introducing the quantities that play a role in the bounds below.
First, we refine the notion of  rank for  a linear map between Euclidean spaces, taking the metric into account.
\begin{definition}[Essential rank]\label{def-ess-rank}
Let $H,H'$ be Euclidean spaces, let $\ell:H\rightarrow H'$ be a linear map, let $\alpha \geq 0$.
The  \emph{$\alpha$-essential rank} of $\ell$ is the number (counted with multiplicity) of  singular values of $\ell$ that are strictly greater than $\alpha$. We denote it by $\rk_\alpha(\ell)$.
\end{definition}

Clearly, $\rk_0(\ell) = \rk(\ell)$ is nothing more than the rank of $\ell$.
Several characterisations of $\rk_{\alpha}$ will be given in \Cref{Sec-carac-rank-percep}.
For a given subspace $W \subseteq H$, we write
\[
\rk_\alpha(\ell \mid W) = \rk_\alpha( \ell_{\mid W})
\]
for the $\alpha$-essential rank of $\ell$ restricted to $W$.

\begin{definition}[Essential acuity] \label{def-acuity}
Let $\bba=(\alpha_{j})_{j\in J} \in \R_{\geq 0}^J$, and $W\subseteq H$ a subspace. The \emph{$\bba$-essential acuity of $\sD$ within $W$} is
$$\sA_{\bba}(\sD \mid W) = \sum_{j\in J} q_{j} \rk_{\alpha_{j}}(\ell_{j} \mid W).$$
\end{definition}

The algebraic perceptivity condition \eqref{eq:sumrk} amounts to $\sA_{0}(\sD \mid W)  \geq \dim W$ for all $W$. The next definition can be seen as a quantified  version of \eqref{eq:sumrk} involving the metric.

\begin{definition}[Metric perceptivity]
We say $\sD$ is \emph{$\bba$-perceptive}  if for every subspace $W\subseteq H$, we have
\begin{equation}\label{def-alpha-percep}
\sA_{\bba}(\sD \mid W)  \geq \dim W.
\end{equation}
See \S\ref{Sec-percep} for alternative characterisations in the context where $\ell_{j}$ are orthogonal projectors.
\end{definition}

We will also make use of the constant
\begin{equation}\label{eq:sE} \sE(\sD)=\prod_{j\in J}q_{j}^{-q_{j}\dim H_{j}/2}.
\end{equation}
$ \sE(\sD)$ can be interpreted as some weighted \emph{exponential entropy} for the vector $(q_{j})_{j\in J}$ seen as a measure on $J$.
It is dominated by \[
\sE(\sD) \leq \exp\left( {\textstyle\frac{1}{2 e} \sum_{j \in J} \dim H_j}\right).
\]

\begin{thm}[Upper bound] \label{thm:ub}
Let $\sD = \bigl((\ell_j)_{j \in J}, (q_j)_{j \in J}\bigr)$ be a Brascamp-Lieb datum. Assume $\sD$ globally critical (i.e. \eqref{eq:gc} holds) and $\bba$-perceptive for some $\bba\in \R_{> 0}^J$. Then writing $d=\dim H$, we have
\[
\BL(\sD) \leq d^{\frac{d}{2}} \sE(\sD) \prod_{j \in J} \alpha_j^{- q_j \dim H_j}.
\]
\end{thm}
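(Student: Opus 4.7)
The plan is to invoke Lieb's theorem to reduce to Gaussian test functions, and then bound the resulting Gaussian expression via a Cauchy--Binet expansion combined with a matroid-based weighted AM--GM argument.

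By Lieb's theorem, extended to arbitrary rank by Bennett--Carbery--Christ--Tao, one has the Gaussian identity $\BL(\sD)^2 = \sup_{(A_j)} \prod_j (\det A_j)^{q_j} / \det M$, where $A_j$ ranges over positive definite operators on $H_j$ and $M = \sum_j q_j \ell_j^* A_j \ell_j$. The target bound is therefore equivalent to showing, for every such family,
\[
\det M \;\geq\; d^{-d} \prod_j q_j^{q_j \dim H_j}\, \alpha_j^{2 q_j \dim H_j}\, (\det A_j)^{q_j}.
\]

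Fix $(A_j)$ and diagonalise each $A_j = \sum_i \mu_{j,i}\, e_{j,i} e_{j,i}^*$ in an orthonormal eigenbasis of $H_j$. Setting $v_{j,i} = \ell_j^* e_{j,i}\in H$, one has $M = \sum_{(j,i)\in J^*} q_j \mu_{j,i}\, v_{j,i} v_{j,i}^*$ with $J^* = \{(j,i): j \in J,\ 1 \leq i \leq \dim H_j\}$, and Cauchy--Binet gives
\[
\det M \;=\; \sum_{S \in \binom{J^*}{d}}\Bigl(\prod_{(j,i)\in S} q_j \mu_{j,i}\Bigr)\, |\det V_S|^2,
\]
where $V_S$ is the $d\times d$ matrix with columns $v_{j,i}$ for $(j,i)\in S$. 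Global criticality and algebraic perceptivity (weaker than $\bba$-perceptivity) are equivalent, via orthogonal duality $W \leftrightarrow U_j = (\ell_j W)^{\perp}$, to the base polytope inequalities $\sum_{(j,i) \in T} q_j \leq \rk\{v_{j,i}: (j,i) \in T\}$ for all $T \subseteq J^*$ in the linear matroid generated by the $v_{j,i}$'s. Edmonds' matroid polytope theorem then produces a probability measure $p = (p_S)_S$ on bases $S\in\binom{J^*}{d}$ with marginals $\sum_{S\ni(j,i)} p_S = q_j$. Applying weighted AM--GM to the Cauchy--Binet sum and regrouping exponents via the marginal identities yields
\[
\det M \;\geq\; \prod_j q_j^{q_j \dim H_j}\, (\det A_j)^{q_j}\, \prod_S \bigl(|\det V_S|^2 / p_S\bigr)^{p_S},
\]
reducing the target to
\[
\prod_S \Bigl(\frac{|\det V_S|^2}{p_S \prod_{(j,i) \in S}\alpha_j^2}\Bigr)^{p_S} \;\geq\; d^{-d}.
\]

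The final step invokes $\bba$-perceptivity in its geometric form $\rk_{\alpha}(\ell) = \max\{\dim V : V \subseteq H,\ \|\ell v\| > \alpha \|v\|\ \forall v \in V\setminus\{0\}\}$, one of the characterisations of $\rk_\alpha$ recalled in \S\ref{Sec-carac-rank-percep}. The idea is to choose the measure $p$ (allowed to depend on $A$) supported on bases $S$ adapted to the $\alpha_j$-expansive subspaces furnished by this characterisation, so that each such basis enjoys a volumetric lower bound of the form $|\det V_S| \geq \prod_{(j,i)\in S} \alpha_j$ up to a controlled loss. The factor $d^{-d}$ absorbs the worst-case entropy cost from $\prod_S p_S^{-p_S}$ together with any residual defect in the volume estimate.

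The main obstacle is this last step: translating the metric content of $\bba$-perceptivity (a statement about subspaces of $H$ on which each $\ell_j$ expands) into a volumetric lower bound on the family $(v_{j,i})$, whose geometry depends on the arbitrary choice of eigenbases of the $A_j$'s, requires a delicate construction of $p$. A natural route is to average over SVD-type decompositions of each $\ell_j$ restricted to the expansive subspaces produced by perceptivity, and then verify that the resulting averaged measure lies in the base polytope and realises the advertised entropy-volume trade-off.
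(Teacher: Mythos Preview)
Your reduction via Lieb's theorem, Cauchy--Binet, Edmonds' base polytope, and weighted AM--GM is correct and is a recognised route to Brascamp--Lieb estimates. But the proof is incomplete: you explicitly flag the final step as ``the main obstacle'' and only sketch a hoped-for construction. This is precisely the step where the hypothesis of $\bba$-perceptivity must enter, and the difficulty you identify is genuine. The vectors $v_{j,i} = \ell_j^* e_{j,i}$ depend on the eigenbasis of $A_j$, which is arbitrary; the volumes $|\det V_S|$ therefore have no a priori relation to the singular-value data encoded in $\rk_{\alpha_j}(\ell_j \mid W)$. Your suggestion to choose $p$ supported on ``adapted'' bases runs into the constraint that $p$ must still have marginals exactly $q_j$ on \emph{every} index $(j,i)$, including those coming from eigendirections of $A_j$ on which $\ell_j^*$ may be nearly degenerate. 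There is no argument offered that such a $p$ exists with the required volume lower bound, and the claim that $d^{-d}$ absorbs the residual defect is unsubstantiated. (Incidentally, the characterisation of $\rk_\alpha$ you quote is not among those stated in \S\ref{Sec-carac-rank-percep}; the paper gives the dual form via \Cref{lm:arank}.)

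The paper's proof avoids this obstruction by working on the other side: rather than diagonalising each $A_j$, it diagonalises $M = T + \sum_j q_j \ell_j^* A_j \ell_j$ in an orthonormal basis $(e_1,\dots,e_d)$ of $H$ with eigenvalues $\lambda_1 \geq \dots \geq \lambda_d$. For each $j$ it then builds, by a greedy procedure on the $e_i$'s, an index set $I_j \subset \llrr{1,d}$ of size $\dim H_j$ such that the vectors $(\ell_j e_i)_{i \in I_j}$ are quantitatively spread (volume $\geq d^{-\dim H_j/2}$) and, crucially, such that $\abs{I_j \cap (k,d]} \geq \rk_1(\ell_j \mid V_{>k})$ for every $k$. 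Perceptivity then gives $\sum_j q_j \abs{I_j \cap (k,d]} \geq d - k$, and a telescoping product over $k$ converts this into the required eigenvalue bound. The key point is that the index sets $I_j$ are built from the eigenbasis of $M$, not of the $A_j$'s, so the perceptivity hypothesis applies directly to the spaces $V_{>k}$ without any dependence on the Gaussian input.
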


To complete this result, we record a lower bound. Below, we write $\|\ell_{j}\|$ for the operator norm of $\ell_{j}$.
\begin{thm}[Lower bound] \label{thm:lb}
Let $\sD = \bigl((\ell_j)_{j \in J}, (q_j)_{j \in J}\bigr)$ be a Brascamp-Lieb datum.  Set $C:=1+\sup_{j\in J} \|\ell_{j}\|$.
Then for all $\alpha \in (0, 1]$ and all $W \in \Gr(H)$,
\[
\BL(\sD) \geq \left(C^2\sum\nolimits_{j \in J} q_j \right)^{-\dim H / 2}  \alpha^{\sum_{j \in J} q_j \rk_{\alpha}(\ell_j \mid W) - \dim W}.
\]
\end{thm}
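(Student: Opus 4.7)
The plan is to test the Brascamp--Lieb inequality \eqref{def-BL-cst} against Gaussians $f_j(y) = \exp\!\bigl(-\pi \langle A_j y, y\rangle\bigr)$ with symmetric positive definite $A_j$ on $H_j$. Computing the Gaussian integrals on both sides and setting $M := \sum_j q_j \ell_j^* A_j \ell_j$ yields the classical lower bound
\[
\BL(\sD) \;\geq\; \frac{\prod_j (\det A_j)^{q_j/2}}{(\det M)^{1/2}},
\]
so the proof reduces to choosing $A_j$ adapted to $W$ and $\alpha$. We may assume global criticality $\sum_j q_j \dim H_j = \dim H$, else $\BL(\sD) = \infty$ by a standard scaling argument. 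Write $d = \dim H$, $w = \dim W$, $r_j = \rk_\alpha(\ell_j|_W)$. The argument will split according to the sign of $w - \sum_j q_j r_j$.

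When $\sum_j q_j r_j \geq w$, the factor $\alpha^{\sum q_j r_j - w}$ is at most $1$ since $\alpha \leq 1$, so it suffices to establish the ``base'' bound $\BL(\sD) \geq (C^2 \sum_j q_j)^{-d/2}$. This is immediate from the Gaussian lower bound with $A_j = I_{H_j}$: then $M = \sum_j q_j \ell_j^* \ell_j$ has operator norm at most $\sum_j q_j \|\ell_j\|^2 \leq C^2 \sum_j q_j$, so $\det M \leq (C^2 \sum q_j)^d$.

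The substantive case is $w > \sum_j q_j r_j$ (perceptivity fails at $(W, \alpha)$). For each $j$, I take the SVD of $\ell_j|_W \colon W \to H_j$ and let $U_j \subseteq H_j$ be the span of the singular vectors whose singular values exceed $\alpha$, so $\dim U_j = r_j$. Set
\[
A_j \;:=\; C^{-2} P_{U_j} + \alpha^{-2} P_{U_j^\perp}.
\]
A direct computation gives $\prod_j (\det A_j)^{q_j/2} = C^{-\sum q_j r_j}\, \alpha^{-(d - \sum q_j r_j)}$ thanks to criticality. To bound $\det M$, I decompose $H = W \oplus W^\perp$ and apply Fischer's inequality $\det M \leq \|M|_W\|^{w} \, \|M|_{W^\perp}\|^{d-w}$. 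The delicate step, which is the heart of the proof, is the operator-norm estimate on $M|_W$: for $x \in W$ the SVD yields $\|P_{U_j^\perp}\, \ell_j x\| \leq \alpha \|x\|$ and $\|P_{U_j}\, \ell_j x\| \leq C \|x\|$, so the weights in $A_j$ exactly cancel the singular values and give, term by term, $\langle A_j \ell_j x, \ell_j x\rangle \leq \|x\|^2$, hence $\|M|_W\| \leq \sum_j q_j$. On $W^\perp$, the coarse estimate $\langle A_j \ell_j x, \ell_j x\rangle \leq \alpha^{-2} \|\ell_j x\|^2 \leq \alpha^{-2} C^2 \|x\|^2$ (using $C^{-2} \leq \alpha^{-2}$) gives $\|M|_{W^\perp}\| \leq \alpha^{-2} C^2 \sum_j q_j$. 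Substituting into the Gaussian lower bound produces
\[
\BL(\sD) \;\geq\; (C^2 {\textstyle\sum_j} q_j)^{-d/2}\; C^{w - \sum_j q_j r_j}\; \alpha^{\sum_j q_j r_j - w};
\]
since $w > \sum_j q_j r_j$ and $C \geq 1$, the extra factor $C^{w - \sum_j q_j r_j} \geq 1$ may be dropped, yielding the claimed inequality.
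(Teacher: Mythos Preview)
Your proof is correct and follows essentially the same route as the paper: test the inequality on Gaussians $f_j=\chi_{A_j}$ with $A_j$ adapted to the SVD of $\ell_j|_W$, then bound $\det M$ via the block structure on $W\oplus W^\perp$ (the paper uses the Hadamard/Cholesky bound of \Cref{eq:Choleski}, which plays the role of your Fischer step). The one difference worth noting is the normalisation of $A_j$: the paper takes $A_j=\alpha^2 P_{U_j}+P_{U_j^\perp}$ rather than your $C^{-2}P_{U_j}+\alpha^{-2}P_{U_j^\perp}$, which yields $\prod_j(\det A_j)^{q_j/2}=\alpha^{\sum_j q_j r_j}$ directly and gives the stated bound in one stroke, with no appeal to global criticality and no case split on the sign of $w-\sum_j q_j r_j$.
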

\bigskip

As an illustration, let us compute  those bounds in the context of Young's inequality.
\begin{example}
(Young's inequality)
Consider the Brascamp-Lieb datum $\sD = \bigl( (\ell_1, \ell_2, \ell_3), (q_1, q_2, q_3) \bigr)$ with $H = \R^2$, $H_1 = H_2 = H_3 = \R$,  $\ell_1(x,y) = x$, $\ell_2(x,y) = y$, $\ell_3(x,y) = x - y$.

Then the condition of \Cref{thm:ub} is satisfied for a vector $\bba=(\alpha_1, \alpha_2, \alpha_3) \in \R_{> 0}^3$ if
$q_1 + q_2 + q_3 = 2$,  $0 < q_1, q_2, q_3 \leq 1$, and
\[
\left\{
\begin{array}{l}
\alpha_1^2 + (\alpha_1 + \alpha_3)^2 < 1 \\
\alpha_2^2 + (\alpha_2 + \alpha_3)^2 < 1.
\end{array}
\right.
\]
In particular, it is satisfied for all $\alpha_1 = \alpha_2 = \alpha_3 < \frac{1}{\sqrt{5}}$.
\Cref{thm:ub} gives $\BL(\sD) \leq \frac{10}{q_1^{q_1/2}q_2^{q_2/2}q_3^{q_3/2}}$.
On the other hand, \Cref{thm:lb} (applied with $\alpha=1$) gives $\BL(\sD) \geq \frac{1}{18}$.
We can compare these estimates to the actual value of $\BL(\sD)$, see \cite[Example 1.5]{BCCT08}, namely $\BL(\sD)= \left(\prod_{j=1}^3 \frac{(1 - q_j)^{1 - q_j}}{q_j^{q_j}}\right)^{1/2}$.
\end{example}
\bigskip

\noindent {\bf A useful variant.}
Before using the above theorems, it can be relevant to apply a change of variables in order to reduce to the case where the $\ell_{j}$ are orthogonal projectors. We present this reduction. Let $\sD = \bigl((\ell_j)_{j \in J}, (q_j)_{j \in J}\bigr)$ be a Brascamp-Lieb datum.  For every $j$, assume $\ell_{j}(H)=H_{j}$. Write $\pi_{j}:=\pi_{\parallel \Ker \ell_{j}} :H \rightarrow H$ the orthogonal projector with same kernel as $\ell_{j}$, set $\sD_{\mathrm{proj}}:=((\pi_{j})_{j\in J}, (q_{j})_{j\in J})$ the associated datum. Then by\footnote{This follows indeed from \cite[Lemma 3.3]{BCCT08} by observing that $\sD$ and $\sD_{\mathrm{proj}}$ are equivalent in the sense of \cite[Definition 3.1]{BCCT08}, due to the equality $\ell_{j}=\ell_{j | (\Ker \ell_{j})^\perp}\circ  \pi_{j}$.} \cite[Lemma 3.3]{BCCT08}, we have
\begin{equation}\label{var-change}
\BL(\sD)= \Upsilon(\sD)\BL(\sD_{\mathrm{proj}}) \,\,\,\text{ where }\,\,\,
\Upsilon(\sD) = \prod_{j \in J} \abs{\det\nolimits_{\ker(\ell_j)^\perp \to H_j}(\ell_j)}^{- q_j},
\end{equation}
and $\det\nolimits_{\ker(\ell_j)^\perp \to H_j}(\ell_j)$ denotes the determinant of the linear isomorphism $(\ell_{j})_{|\ker(\ell_j)^\perp} : \ker(\ell_j)^\perp \to H_j$ (with respect to orthonormal bases).

Combining    \eqref{var-change} with \Cref{thm:ub}, we get that if $\sD_{\mathrm{proj}}$ is globally critical and $\bba$-perceptive, then
\begin{equation}\label{up-v2}
\BL(\sD) < d^{\frac{d}{2}} \sE(\sD) \Upsilon(\sD) \prod_{j \in J} \alpha_j^{- q_j \dim H_j}.
\end{equation}
Similarly, a variant of \Cref{thm:lb} can be deduced from \eqref{var-change}.

Although slightly more complicated to formulate, this version of the upper bound has the advantage that the perceptivity hypothesis only concerns the geometric datum $\Ker \ell_{j}$ and the weights $q_{j}$, while the way the linear maps $\ell_{j}$ may distort space is fully captured (without loss) by the parameter $\Upsilon(\sD)$.  Another benefit is that perceptivity has additional characterisations in the context of orthogonal projectors. We will see this in \Cref{Sec-carac-rank-percep}.

An example where \eqref{up-v2} is better than \Cref{thm:ub} is given by $\sD=\sD_{\lambda} := \bigl((\ell_1, \ell_2, \ell_{3}), (\frac{1}{2},\frac{1}{2}, \frac{1}{2})\bigr)$ where $\ell_{i}:\R^3\rightarrow \R^2$, and $\ell_{1}(x)=(x_{2}, x_{3})$, $\ell_{2}(x)=(x_{1}, x_{3})$, $\ell_{3}(x)=(x_{1}, \lambda x_{2})$, with  $\lambda>0$ fixed and small.  For more details, see \Cref{Sec-appendix}.

\bigskip
\noindent{\bf Qualitative consequences.}
From Theorems \ref{thm:ub}, \ref{thm:lb}, we  derive a few qualitative corollaries.

\bigskip
\noindent{\bf \normalsize Finiteness.}
Our approach consists in making effective the proof of the aforementioned finiteness criterion from \cite{BCCT08}. Therefore, it is not surprising that Theorems \ref{thm:ub}, \ref{thm:lb} imply the latter.

To recover the criterion, note  on the one hand that if \eqref{eq:gc} fails, then by scaling consideration, $\BL(\sD) = +\infty$.
If \eqref{eq:sumrk} fails for some subspace $W \subseteq H$, then we have for all $\alpha \in (0,1)$,
\[
\sA_{\alpha}(\sD\mid W)  - \dim W \leq \sum_{j \in J} q_j  \dim\ell_{j}(W) - \dim W < 0.
\]
Letting $\alpha \to 0$ in \Cref{thm:lb}, we obtain $\BL(\sD) = +\infty$.

On the other hand, assume $\sD$ satisfies \eqref{eq:gc}  and  \eqref{eq:sumrk}. Note that given $W$, we have $\sA_{\bba}(\sD|W')= \sA_{0}(\sD|W')$ for $\bba\in \R_{>0}^J$ close enough to zero and $W'$ close enough to $W$. By compactness of the Grassmanian $\Gr(H)$, we deduce that $\sD$ is $\bba$-perceptive for $\bba\in \R_{>0}^J$ in a neighorhood of $0$, whence $\BL(\sD) < + \infty$ by \Cref{thm:ub}.

\bigskip
\noindent{\bf \normalsize Local boundedness.} 
The condition that $\sD$ is  $\bba$-perceptive is stable under small perturbation of the $\ell_{j}$.
Thus, \Cref{thm:ub} implies the following stability result, originally due to Bennett-Bez-Flock-Lee~\cite{BBFL18}: if $\BL(\sD) < +\infty$ for some Brascamp-Lieb datum $\sD = (\boldsymbol{\ell}, \mathbf{q})$, then fixing $\mathbf{q}$, there is a constant $C > 0$ such that $\BL(\boldsymbol{\ell}', \mathbf{q}) < C$ holds uniformly for $\boldsymbol{\ell}'$ ranging in a small neighborhood of $\boldsymbol{\ell} \in \prod_{j \in J} \Ell(H, H_j)$.

Here $\Ell(H, H_j)$ denotes the set of linear maps from $H$ to $H_{j}$.
Let us mention that continuity and differentiability of $\BL(\boldsymbol{\ell}, \mathbf{q})$ in the variable $\boldsymbol{\ell}$ are investigated by Valdimarsson~\cite{Valdimarsson}, Bennett-Bez-Cowling-Flock~\cite{BBCF} and Garg-Gurvits-Oliveira-Wigderson~\cite{GGOW}.

\bigskip
\noindent{\bf  \normalsize Joint local boundedness for simple data.}
The following joint local boundedness seems to be new.
Recall that a globally critical Brascamp-Lieb datum is said to be \emph{simple} if \eqref{eq:sumrk} holds as a strict inequality for any proper nonzero subspace $W \in \Gr(H)$.
\begin{corollary}[Joint local boundedness in $(\ell_j)_j$ and $(q_j)_j$ near simple data]
Let $\sD$ be a simple Brascamp-Lieb datum.
Then there exists a constant $C > 0$ and a neighborhood $U$ of $\sD$ in $\prod_{j \in J} \Ell(H,H_j) \times \R_{>0}^J$ such that for any $\sD' \in U$ that is globally critical, we have $\BL(\sD') < C$.
\end{corollary}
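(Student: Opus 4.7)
The plan is to produce a single vector $\bba \in \R_{>0}^J$ and a neighborhood $U$ of $\sD$ on which every globally critical $\sD' \in U$ is $\bba$-perceptive; \Cref{thm:ub} will then give
\[
\BL(\sD') \leq d^{d/2}\, \sE(\sD') \prod_{j \in J} \alpha_j^{-q'_j \dim H_j},
\]
whose right-hand side is continuous in $\sD' = ((\ell'_j)_j, (q'_j)_j)$, so after shrinking $U$ to be relatively compact in $\prod_{j} \Ell(H, H_j) \times \R_{>0}^J$ one obtains the desired uniform constant $C$.

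The main technical input is that $\rk_\alpha(\ell \mid W)$ is lower semicontinuous in the triple $(\ell, W, \alpha)$: the inequality $\rk_\alpha(\ell \mid W) \geq k$ is equivalent to $s_k(\ell_{\mid W}) > \alpha$, an open condition since singular values depend continuously on $\ell$ and on $W \in \Gr(H)$. For each fixed $W \in \Gr(H)$ I would use this to construct an open neighborhood $U_W$ of $\sD$, an open neighborhood $V_W \subseteq \Gr_{\dim W}(H)$ of $W$, and $\bba_W \in \R_{>0}^J$ such that $\sA_{\bba_W}(\sD' \mid W') \geq \dim W'$ for all $(\sD', W') \in U_W \times V_W$. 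If $W$ is proper and nonzero, simplicity combined with the integrality of $\rk_0$ gives $\sA_0(\sD \mid W) \geq \dim W + 1$, and lower semicontinuity provides the required neighborhoods with room to spare. If $W = H$, each $\ell_j$ is surjective (automatic from $\BL(\sD) < +\infty$, which is necessary for the desired conclusion to hold at $\sD' = \sD$ itself, and which combined with global criticality forces $\dim \ell_j(H) = \dim H_j$); choosing the components of $\bba_W$ below the smallest singular values of the $\ell_j$ forces $\rk_{\alpha_j}(\ell'_j \mid H) = \dim H_j$ on a small neighborhood of $\sD$, and global criticality of $\sD'$ then yields the equality $\sA_{\bba_W}(\sD' \mid H) = \sum_{j} q'_j \dim H_j = \dim H$. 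The case $W = 0$ is vacuous.

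To conclude, I would use compactness of $\Gr(H) = \bigsqcup_{k=0}^{d} \Gr_k(H)$ to extract a finite subcover $V_{W_1}, \dots, V_{W_n}$, set $U = \bigcap_i U_{W_i}$, and take $\bba$ to be the componentwise minimum of the $\bba_{W_i}$. Since $\rk_\alpha$ is non-increasing in $\alpha$, this uniform choice only strengthens each local inequality, so every $\sD' \in U$ is $\bba$-perceptive. The main anticipated obstacle is the endpoint case $W = H$: unlike the proper nonzero subspaces it offers no quantitative slack from a strict inequality, and one must lean on surjectivity of the $\ell_j$ together with global criticality of $\sD'$ to preserve the exact equality $\sA_{\bba_W}(\sD' \mid H) = \dim H$ under perturbation — this is where both the perturbation in $(\ell_j)_j$ and the perturbation in $(q_j)_j$ must be controlled simultaneously.
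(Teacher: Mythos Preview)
Your overall approach matches the paper's implicit argument (the paper does not spell out a proof, but the surrounding ``Finiteness'' and ``Local boundedness'' paragraphs indicate exactly this route): use compactness of $\Gr(H)$ and lower semicontinuity of $\rk_\alpha$ to find a uniform $\bba$ for which perceptivity persists on a neighborhood of $\sD$, then apply \Cref{thm:ub} and bound the resulting continuous expression on a relatively compact neighborhood.

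There is one error. You claim that for proper nonzero $W$, ``simplicity combined with the integrality of $\rk_0$ gives $\sA_0(\sD \mid W) \geq \dim W + 1$''. This is false: each $\rk_0(\ell_j \mid W) = \dim \ell_j(W)$ is indeed an integer, but the weights $q_j$ are arbitrary positive reals, so $\sA_0(\sD \mid W) = \sum_j q_j \dim \ell_j(W)$ need not be an integer. Simplicity only supplies the strict inequality $\sA_0(\sD \mid W) > \dim W$. Fortunately that is all you need. The positive gap $\sA_0(\sD \mid W) - \dim W$ already provides the slack for your local argument: lower semicontinuity gives $\rk_{\alpha_j}(\ell'_j \mid W') \geq \rk_0(\ell_j \mid W)$ for $(\alpha_j,\ell'_j,W')$ near $(0,\ell_j,W)$, and continuity of $q \mapsto \sum_j q_j \rk_0(\ell_j \mid W)$ absorbs the perturbation in the weights; compactness of $\Gr(H)$ then lets you pass to a finite subcover before these gaps can shrink to zero. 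So the structure of your proof is sound once you replace the erroneous ``$\,+1$'' with the genuine strict inequality coming from simplicity. Your handling of the endpoint $W = H$ is correct.
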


\bigskip
\subsection{Bounds for   localised regularised Brascamp-Lieb constants.} \label{Sec-bndBL2}
We present effective  \emph{regularised} and \emph{localised}  Brascamp-Lieb inequalities.
They imply Theorems \ref{thm:ub}, \ref{thm:lb}, and have the advantage to be meaningful for  Brascamp-Lieb data that do not necessarily satisfy \eqref{eq:gc} or \eqref{eq:sumrk}.
Those estimates strengthen former quantitative bounds due to Maldague \cite{Maldague22}.

\bigskip
We first introduce the corresponding definitions.
Given a Euclidean space $H$, a positive definite symmetric endomorphism $R \in \End(H)$, and $x\in H$, set
\begin{equation}\label{def-chi-sN}
\chi_{R}(x)= e^{- \pi \langle x, R x \rangle } \quad \text{ and }\quad \sN_{R}(x)=(\det R)^{1/2}\chi_{R}(x).
\end{equation}
We interprete $\chi_{R}$ as a Gaussian truncation function, while $\sN_{R}$ is the centered normal probability density of covariance $(2\pi R)^{-1}$.
A function $f : H \to \R_{\geq 0}$ is said to be of \emph{type $R$} if it takes the form of a convolution $f = \sN_R * \mu$ for some finite Borel measure $\mu$ on $H$.
In essence, being of type $R$ expresses that a function comes from the mollification of a positive measure at a scale given by $R$ (say $\sN_{R}^{-1}[1, +\infty)$). As $R$ gets bigger, the scale gets smaller, so the condition becomes less restrictive.

A   localised regularised Brascamp-Lieb datum is a triple $(\sD, \bbR, T)$ where
$\sD = ((\ell_j)_{j \in J}, (q_j)_{j \in J})$ is a Brascamp-Lieb datum,
$\bbR = (R_{j})_{j \in J}$ is a collection such that each $R_j$ is a positive definite symmetric endomorphism of $H_j$
and $T$ is a positive definite symmetric endomorphism of $H$.

The  localised regularised Brascamp-Lieb constant of $(\sD, \bbR, T)$ is the smallest number $\BL(\sD, \bbR, T)\in [0,+\infty]$ such that
\[
\int_{H} \chi_T \prod_{j \in J} (f_{j}\circ \ell_{j})^{q_{j}} \leq \BL(\sD, \bbR, T) \prod_{j \in J} \left(\int_{H_{j}} f_{j} \right)^{q_{j}},
\]
for all collections $(f_{j})_{j \in J}$ such that $f_j :  H_{j}\rightarrow \R_{\geq0}$ is of type $R_j$ for each $j \in J$. In sum,  \emph{the parameter $\bbR$ imposes a certain class of regular functions, while the parameter $T$ yields a gradual truncation along the integral.}
This definition (with $T = \Id_H$) is extracted from  \cite[Section 8]{BCCT08}.

\begin{remark}
A variant of $\BL(\sD, \bbR, T)$, asking for a straight regularisation (inputs $f_{j}$ tile-wise constant) and a straight truncation ($\chi_T$ replaced by $\1_{B_1^H}$) is used in \cite{BCCT10, Maldague22, Zorin}. Upper bound estimates for this variant variant follow from counterparts on  $\BL(\sD, \bbR, T)$ using suitable mollification, as in the proof of \Cref{thm-visual-ineq} below.
\end{remark}

\bigskip
We give an explicit upper bound on $\BL(\sD, \bbR, T)$.
In situations  where \eqref{eq:gc} is not satisfied, we use the \emph{total acuity}
$$\sA(\sD)=\sum_{j\in J}q_{j}\dim H_{j}.$$

For situations where \eqref{eq:sumrk} is violated, we need the following notion.
\begin{definition}[Metric perceptivity 2]
Given $\bba\in \R_{\geq0}^J$, $\beta\in \R_{\geq0}$, we say $\sD$ is \emph{$(\bba, \beta)$-perceptive}  if for every subspace $W\subseteq H$, we have
\begin{equation}\label{def-alpha-beta-percep}
\sA_{\bba}(\sD \mid W)  \geq \dim W -\beta.
\end{equation}
\end{definition}

We also use the norm
\[
N(\sD, \bbR, T)=\big\|T+ \sum_{j \in J} q_{j} \ell^*_{j} \,R_{j}\, \ell_{j} \big\|
\]
where $\ell^*_{j}:H_{j}\rightarrow H$ stands for the adjoint of $\ell_{j}$. Finally, we recall the quantity $\sE(\sD)$ has been defined in \eqref{eq:sE}.

\begin{thm}[Upper bound]\label{pr:rlub}
Let $(\sD,\bbR, T)$ be a  localised regularised Brascamp-Lieb datum, let  $\bba \in \R_{>0}^J$ and $\beta \in \R_{\geq 0}$. Assume $\sD$ is $(\bba, \beta)$-perceptive and  $\rk_{\alpha_j}(\ell_j) = \dim H_j$ for each $j \in J$.
Then, writing $d = \dim H$, we have
\begin{equation}\label{eq:rlub}
\BL(\sD, \bbR, T)
\leq d^{\frac{\sA(\sD)}{2}}  \sE(\sD) \prod_{j \in J} \alpha_j^{-q_j \dim H_j}  N(\sD, \bbR, T)^{\frac{\sA(\sD)-d+\beta}{2}} \norm{T^{-1}}^{\frac{\beta}{2}}.
\end{equation}
\end{thm}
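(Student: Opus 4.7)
The plan is to prove \eqref{eq:rlub} by reducing the supremum defining $\BL(\sD, \bbR, T)$ to Gaussian test functions, computing the resulting Gaussian integral explicitly in terms of $M := T + \sum_{j} q_j \ell_j^{*} R_j \ell_j$, and then bounding the determinantal ratio $\prod_j (\det R_j)^{q_j/2}/\sqrt{\det M}$ by means of the $(\bba, \beta)$-perceptivity hypothesis.

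\textbf{Reduction to Gaussian inputs.} Since each admissible $f_j$ has the form $f_j = \sN_{R_j} * \mu_j$ for a positive finite measure $\mu_j$ (so $\int f_j = \|\mu_j\|$), I invoke the Lieb-type Gaussian extremiser principle for the regularised Brascamp-Lieb constant (in the spirit of \cite[\S 8]{BCCT08}; the truncating Gaussian $\chi_T$ is absorbed into the Gaussian integrand). This reduces the problem to inputs of the form $f_j(y) = \sN_{R_j}(y - a_j)$ with $a_j \in H_j$, for which $\int f_j = 1$.

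\textbf{Gaussian integral.} For such inputs, setting $v := M^{-1}\sum_j q_j \ell_j^{*} R_j a_j$ and completing the square
\[
\langle x, Tx\rangle + \sum_j q_j \langle \ell_j x - a_j, R_j(\ell_j x - a_j)\rangle = \langle x-v, M(x-v)\rangle + \sum_j q_j \|R_j^{1/2} a_j\|^2 - \langle v, Mv\rangle
\]
and integrating in $x$ using $\int_H e^{-\pi\langle y, My\rangle}\dd y = (\det M)^{-1/2}$ give
\[
\int_H \chi_T \prod_j (f_j \circ \ell_j)^{q_j} = \frac{\prod_j (\det R_j)^{q_j/2}}{(\det M)^{1/2}}\, e^{\pi(\langle v, Mv\rangle - \sum_j q_j \|R_j^{1/2} a_j\|^2)}.
\]
A Cauchy-Schwarz argument on $\langle v, Mv\rangle = \sum_j q_j \langle R_j^{1/2} \ell_j v, R_j^{1/2} a_j\rangle$, combined with $\sum_j q_j \|R_j^{1/2}\ell_j v\|^2 = \langle v, (M - T)v\rangle \leq \langle v, Mv\rangle$, yields $\langle v, Mv\rangle \leq \sum_j q_j \|R_j^{1/2} a_j\|^2$, so the exponential factor is at most $1$. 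Therefore
\[
\BL(\sD, \bbR, T) \leq \prod_j (\det R_j)^{q_j/2}\, (\det M)^{-1/2}.
\]

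\textbf{Determinantal bound and main obstacle.} The remaining task, which I expect to be the most technical part, is to show
\[
\prod_j (\det R_j)^{q_j/2}\, (\det M)^{-1/2} \leq d^{\sA(\sD)/2}\,\sE(\sD)\prod_j \alpha_j^{-q_j \dim H_j}\, N(\sD, \bbR, T)^{(\sA(\sD)-d+\beta)/2}\, \|T^{-1}\|^{\beta/2}.
\]
Let $\mu_1 \geq \dots \geq \mu_d$ denote the eigenvalues of $M$. The elementary bounds $\mu_i \leq \|M\| \leq N(\sD, \bbR, T)$ and, from $M \geq T$, $\mu_i \geq \|T^{-1}\|^{-1}$ control the extreme eigenvalues. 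The $(\bba, \beta)$-perceptivity assumption, together with the min-max characterisation of eigenvalues, guarantees that at most $\beta$ of the $\mu_i$ can fail to be significantly larger than $\|T^{-1}\|^{-1}$: on any subspace $W$, $\langle w, Mw\rangle \geq \sum_j q_j \|R_j^{1/2}\ell_j w\|^2$, and the $\bba$-essential acuity lower bound $\sA_\bba(\sD \mid W) \geq \dim W - \beta$ forces the right-hand side to be large unless $\codim W \leq \beta$. Matching this spectral information against a weighted-AM-GM / Hadamard decomposition of $\prod_j (\det R_j)^{q_j}$ -- with the entropy factor $\sE(\sD) = \prod_j q_j^{-q_j \dim H_j/2}$ emerging from weighted AM-GM applied to the total weight $\sA(\sD) = \sum_j q_j \dim H_j$, and the factor $d^{\sA(\sD)/2}$ arising from per-eigenvalue dimension counts on each $H_j$ -- and then carefully accounting for the $\beta$ ``bad'' eigenvalues controlled by $\|T^{-1}\|^{-1}$ against the $d-\beta$ ``good'' ones controlled by the $R_j$ and $\alpha_j$, so that $N^{\sA(\sD)-d+\beta}$ and $\|T^{-1}\|^{\beta}$ appear with exactly the right exponents, is the delicate combinatorial bookkeeping step I anticipate will require the most care.
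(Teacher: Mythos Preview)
Your reduction to the single Gaussian input $f_j=\sN_{R_j}(\,\cdot\,-a_j)$ is where the argument breaks down. The generalised Lieb theorem for regularised data (see \cite[Corollary~8.15]{BCCT08}, restated here as \Cref{general-Liebthm}) reduces $\BL(\sD,\bbR,T)$ to a supremum over \emph{all} Gaussians $\chi_{A_j}$ with $0<A_j\leq R_j$, not just the extreme choice $A_j=R_j$. Your implicit claim is that the supremum is attained at $A_j=R_j$, but this is false once some $q_j<1$. A one--dimensional example already shows it: take $H=H_1=\R$, $\ell_1=\id$, $q_1=\tfrac12$, $T=t\,\id$, $R_1=R$. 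The Gaussian ratio is $A^{1/4}/(t+A/2)^{1/2}$, which is maximised at $A=2t$, not at $A=R$; for $R\gg t$ your bound $R^{1/4}/(t+R/2)^{1/2}$ is strictly \emph{smaller} than $\BL(\sD,\bbR,T)=(2t)^{-1/4}$, so the inequality you derive points the wrong way. (Your mixture argument would be salvageable via Jensen only when every $q_j\geq 1$, which the theorem does not assume.)

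Once the reduction is stated correctly, the quantity to control is
\[
\sup_{0<A_j\leq R_j}\ \frac{\prod_j(\det A_j)^{q_j}}{\det\bigl(T+\sum_j q_j\,\ell_j^*A_j\ell_j\bigr)},
\]
with $M=T+\sum_j q_j\,\ell_j^*A_j\ell_j$ depending on the variable $A_j$. Your ``spectral information'' heuristic (that perceptivity forces all but $\beta$ eigenvalues of $M$ to be large) is not how perceptivity enters: perceptivity is a condition on the maps $\ell_j$ alone and says nothing direct about the eigenvalues of $M$, which depend also on the uncontrolled $A_j$ and on $T$. The paper's argument instead fixes $(A_j)_j$, diagonalises $M$ in a basis $(e_1,\dots,e_d)$, and builds for each $j$ an index set $I_j\subset\llrr{1,d}$ greedily so that $(\ell_je_i)_{i\in I_j}$ is a quantitatively nondegenerate basis of $H_j$; perceptivity is used only to show $\sum_jq_j\abs{I_j\cap(k,d]}\geq d-k-\beta$ for every $k$. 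From the diagonal relation $\langle\ell_j^*A_j\ell_je_i,e_i\rangle\leq\lambda_i/q_j$ one bounds each $\det A_j$ by a product of the $\lambda_i$ over $I_j$ via a Hadamard/Cholesky inequality (this is where $\sE(\sD)$ and $d^{\sA(\sD)}$ appear), and a telescoping over $k$ converts the index--set lower bounds into the exponents $\frac{\sA(\sD)-d+\beta}{2}$ on $\lambda_1\leq N(\sD,\bbR,T)$ and $\frac{\beta}{2}$ on $\lambda_d^{-1}\leq\|T^{-1}\|$. Your sketch does not contain this combinatorial mechanism, and I do not see how the determinantal bound could be obtained without something of this kind.
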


\begin{remark}
The condition $\rk_{\alpha_j}(\ell_j) = \dim H_j$ can be interpreted as a quantitative strengthening of the condition that $\ell_j \colon H \to H_j$ is surjective.
The latter is often required in the form of a non-degeneracy condition.
\end{remark}

\begin{remark} In the context of \Cref{pr:rlub}, we have $\sA(\sD)=d$ and $\beta=0$, so $\bbR$ and $T$ play no role in the above upper bound.
The upper bound from \Cref{thm:ub} can then be seen as a limit case of \Cref{pr:rlub}, by virtue of the heuristic $\BL(\sD)=\BL(\sD, \infty, 0)$ (see \Cref{Sec-compute-BL} for a detailed proof).

In fact, upper bounds on other variants of the Brascamp-Lieb inequalities can be deduced similarly. For instance, assuming the relation $\sA(\sD)-d+\beta=0$, we see $\bbR$ plays no role in the above upper bound, whence we may pass to the limit to bound in the same manner the \emph{localised} Brascamp-Lieb constant $\BL(\sD, \infty, T)$.
Note the appearance of the condition $\sA(\sD)-d+\beta=0$ is to be expected.
Indeed, set $\beta_{\min}=\beta_{\min}(\sD)$ to be the smallest $\beta\geq0$ such that $\sD$ is $(\bba, \beta)$-perceptive for some $\bba \in [0,1)^J$.
In other words,
\[
\beta_{\min} := \sup_{W \in \Gr(H), \bba \in [0,1)^J} \dim W - \sA_{\bba}(\sD \mid W) = \sup_{W \in \Gr(H)} \dim W - \sA_{\mathbf{0}}(\sD \mid W).
\]
Then it follows from \cite[Theorem 2.2]{BCCT10} that the relation $\sA(\sD)-d+\beta_{\min}=0$
is necessary and sufficient for the finiteness of $\BL(\sD, \infty, T)$.

Another limiting upper bound is that of $\BL(\sD, \bbR, 0)$ under the assumption $\beta=0$, also a necessary and sufficient condition for finiteness of $\BL(\sD, \bbR, 0)$ when $\beta=\beta_{\min}$, see \cite{BCCT10, Maldague22}.
\end{remark}

\begin{remark}
 The exponents $\frac{\sA(\sD)-d+\beta}{2}$ and $\frac{\beta}{2}$ appearing in  \Cref{pr:rlub} are \emph{optimal}. This optimality is justified at the end of \Cref{Sec-visual-inequality} through the connection between \Cref{pr:rlub} and the visual inequality established therein. For the second exponent, optimality is also a direct consequence   of \cite[Theorem 1 (lower bound)]{Maldague22}.
\end{remark}

We also record the following lower bound for localised regularised Brascamp-Lieb constants. For simplicity, we  assume  $R_j = \id_{H_j}$ for each $j \in J$ and $T$ is a contracting homothety.
\begin{thm}[Lower bound]\label{thm:loc-lb}
Let $\sD = \bigl( (\ell_j)_j, (q_j)_j \bigr)$ be a Brascamp-Lieb datum. Set $C:=1+\sup_{j\in J} \|\ell_{j}\|$.
Let $\bR = (\id_{H_j})_{j \in J}$ and $T = t \id_H$ for some $t>0$.

Then for all $\alpha \in (0, 1]$ and all $W \in \Gr(H)$, we have
\[
\BL( \sD, \bR, T) \geq \left( (C/\alpha)^2 t+ C^2\sum\nolimits_{j \in J} q_j \right)^{-\dim H / 2} \alpha^{\sum_{j \in J} q_j \rk_{\alpha}(\ell_j \mid W) - \dim W}.
\]
\end{thm}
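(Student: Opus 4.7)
The plan is to apply the definition of $\BL(\sD,\bR,T)$ to explicit Gaussian test functions tailored to the subspace $W$ and the threshold $\alpha$.

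For each $j\in J$, the singular value decomposition of $\ell_j|_W\colon W\to H_j$ yields orthogonal decompositions $W=W_j^{>}\oplus W_j^{\leq}$ and $H_j=V_j^{>}\oplus V_j^{\leq}$, where $V_j^{>}=\ell_j(W_j^{>})$ is spanned by the left-singular vectors with singular value $>\alpha$, so $\dim V_j^{>}=k_j:=\rk_{\alpha}(\ell_j\mid W)$, and where $|\ell_j x|\leq \alpha|x|$ for all $x\in W_j^{\leq}$. Let $P_j^{>}$ denote the orthogonal projector onto $V_j^{>}$ in $H_j$, set $S_j:=\alpha^2 P_j^{>}+(\id_{H_j}-P_j^{>})\in \End(H_j)$, and take $f_j:=\sN_{S_j}$. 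Since $\alpha\leq 1$, we have $S_j\leq \id_{H_j}$, so a Fourier transform computation (using $\widehat{\sN_S}(\xi)=e^{-\pi\langle\xi,S^{-1}\xi\rangle}$) yields $f_j=\sN_{\id_{H_j}}*\nu_j$ for a finite positive measure $\nu_j$; thus $f_j$ is of type $\id_{H_j}$, and $\int f_j=1$. Plugging into the definition and performing the Gaussian integration,
\[
\BL(\sD,\bR,T) \geq \alpha^{\sum_j q_j k_j}\det(t\id_H+M')^{-1/2}, \qquad M':=\sum_j q_j\ell_j^*S_j\ell_j,
\]
where the prefactor $\alpha^{\sum_j q_j k_j}$ arises from $\det S_j=\alpha^{2k_j}$.

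The task reduces to bounding $\det(t\id+M')$ from above, and this is the main step. Two estimates suffice. The crude bound $\|M'\|\leq C^2\sum_j q_j$ (from $\|S_j\|\leq 1$ and $\|\ell_j\|\leq C$) applies to all $d$ eigenvalues. For the refined bound, given $x\in W$ decompose $x=x_j^{>}+x_j^{\leq}\in W_j^{>}\oplus W_j^{\leq}$; by the SVD, $\ell_j x_j^{>}\in V_j^{>}$ and $\ell_j x_j^{\leq}\in V_j^{\leq}$ are orthogonal, hence
\[
\langle x,M'x\rangle=\sum_j q_j\bigl(\alpha^2|\ell_j x_j^{>}|^2+|\ell_j x_j^{\leq}|^2\bigr)\leq \alpha^2 C^2\Bigl(\sum_j q_j\Bigr)|x|^2.
\]
Thus $M'|_W\leq \alpha^2 C^2\sum_j q_j\cdot \id_W$, and Courant--Fischer then forces at least $\dim W$ of the eigenvalues of $M'$ to be $\leq \alpha^2 C^2\sum_j q_j$. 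Combining the two bounds,
\[
\det(t\id+M')\leq \bigl(t+C^2{\textstyle\sum_j q_j}\bigr)^{d-\dim W}\bigl(t+\alpha^2 C^2{\textstyle\sum_j q_j}\bigr)^{\dim W}.
\]

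Substituting gives $\BL(\sD,\bR,T)\geq \alpha^{\sum q_j k_j}(t+C^2\sum q_j)^{-(d-\dim W)/2}(t+\alpha^2 C^2\sum q_j)^{-\dim W/2}$. The claimed lower bound then follows from the elementary inequality $(tC^2/\alpha^2+C^2\sum q_j)^d\geq \alpha^{-2\dim W}(t+C^2\sum q_j)^{d-\dim W}(t+\alpha^2 C^2\sum q_j)^{\dim W}$, itself a consequence of the identity $tC^2/\alpha^2+C^2\sum q_j=C^2\alpha^{-2}(t+\alpha^2\sum q_j)$ combined with the termwise inequalities $\alpha^2(t+C^2\sum q_j)\leq C^2(t+\alpha^2\sum q_j)$ and $t+\alpha^2 C^2\sum q_j\leq C^2(t+\alpha^2\sum q_j)$, both immediate from $\alpha\leq 1\leq C$. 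The \emph{main obstacle} is precisely the refined determinant estimate: one must exploit the SVD orthogonality $\ell_j(W_j^{>})\perp \ell_j(W_j^{\leq})$ and the contraction $|\ell_j x_j^{\leq}|\leq \alpha|x_j^{\leq}|$ to pin down $\dim W$ of the eigenvalues of $M'$ at the tight scale $\alpha^2 C^2\sum q_j$ rather than the crude $C^2\sum q_j$.
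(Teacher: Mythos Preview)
Your proof is correct and follows essentially the same approach as the paper's: both plug in the Gaussian test functions $A_j=\alpha^2 P_j^{>}+(\id-P_j^{>})$ coming from the singular value decomposition of $\ell_j|_W$, and both exploit the resulting quadratic-form bound $\langle M'x,x\rangle\leq \alpha^2 C^2\bigl(\sum_j q_j\bigr)|x|^2$ on $W$ to control $\dim W$ of the eigenvalues at scale $\alpha^2$. The only cosmetic differences are that the paper phrases the SVD step via \Cref{lm:arank} and its remark, and bounds the determinant using the Hadamard-type \Cref{eq:Choleski} in an orthonormal basis adapted to $W\oplus W^\perp$, whereas you invoke Courant--Fischer directly; these yield the same determinant estimate.
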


\begin{remark}
Theorems \ref{pr:rlub}, \ref{thm:loc-lb} strengthen former quantitative bounds due to Maldague \cite{Maldague22}. More precisely, \cite[Theorem 1]{Maldague22} states essentially
that for fixed $\sD$, for $\bbR=(\Id_{H_j})_{j}$ and $T = t \id_H$ with $t\in (0,1)$, we have  $\kappa t^{-d \beta_{\min} /2}\leq \BL(\sD, \bbR, T) \leq K t^{- d\beta_{\min} /2} $ where $K>\kappa>0$ are constants depending on $\sD, \bbR$. In fact the constant $\kappa$ could be made explicit from the proof in terms of  $d$, $\sup_{j}\|\ell_{j}\|$, $\sum_{j} q_{j}$.
However, the approach in \cite{Maldague22} does not allow one to track down how the constant $K$ in the upper bound depends on $\sD$ and $\bbR$. Exploiting a different strategy, our upper bound in \emph{\Cref{pr:rlub}  manages to cover the dependence on $\sD$ and $\bbR$}. This is vital to deduce the upper bound on $\BL(\sD)$ stated in  \Cref{thm:ub}.

A (weaker) locally uniform upper bound is presented in \cite[Theorem 3]{Maldague22} in the case of orthogonal projectors. This bound is weaker because it requires an exponent bigger than $\beta_{\min}/2$, thus loosing optimality. Here, \Cref{pr:rlub}  does provide a locally uniform bound while keeping the right exponent (because for fixed $(q_j)_{j \in J}$, the  condition that  $\sD = \bigl((\ell_j)_{j \in J}, (q_j)_{j \in J}\bigr)$ is $(\bba,\beta)$-perceptive is  open  in $(\ell_j)_j$.)

We also record that \Cref{thm:loc-lb}  recovers the lower bound in $\kappa t^{-d \beta_{\min} /2}$ mentioned above, by  taking $\alpha = t^{1/2}$ and using the monotonicity $\rk_\alpha(\ell_j \mid W) \leq \rk_0(\ell_j \mid W) = \dim \ell_j(W)$.
\end{remark}

\bigskip
\noindent{\bf Localised regularised variant.}
To conclude \S\ref{Sec-bndBL2}, we point out that the useful variant highlighted in \S\ref{Sec-bnd-BL1} can also be implemented in the  localised regularised setting.  Indeed,  let $(\sD,\bbR, T)$ be a  localised regularised Brascamp-Lieb datum. Provided the surjectivity condition $\ell_{j}(H)=H_{j}$ for all $j\in J$, we see as in \S\ref{Sec-bnd-BL1} that
$$\BL(\sD, \bR, T)= \Upsilon(\sD) \BL(\sD_{\mathrm{proj}}, \bR', T)  \,\,\,\,\, \,\,\,\,\, \,\,N(\sD, \bbR, T)=N(\sD_{\mathrm{proj}}, \bbR', T)$$
where $\bR':=(\varphi_{j}^*  R_{j} \varphi_{j})_{j\in J}$ with $\varphi_{j}$ the linear isomorphism given by $\varphi_{j}:=\ell_{j|(\Ker \ell_{j})^\perp} : (\Ker \ell_{j})^\perp\to H_{j}$.
Combined with \Cref{pr:rlub}, this provides an alternative upper bound for $\BL(\sD, \bR, T)$ where the perceptivity condition only concerns $\sD_{\mathrm{proj}}$.

\bigskip

\subsection{Applications} \label{Sec-appli}
Finally, we discuss some consequences of \Cref{pr:rlub}.

\bigskip
\noindent{\bf   Visual inequality.}
Let $H$ be a Euclidean space.
Given a subset $A \subseteq H$ and $\delta>0$, we denote by $\cN_\delta(A)$ the $\delta$-covering number of $A$, i.e. the minimal number of $\delta$-balls that is needed to cover $A$.

Consider a collection of lines $(L_{j})_{1 \leq j \leq d}$ in $H$ and $\alpha\in (0, 1/2)$. Write $L_{j}=\R v_{j}$ with $v_{j}$ unitary, and  $\pi_{L_{j}}$  the orthogonal projector of image $L_{j}$.
If $\norm{v_{1} \wedge \dotsm \wedge v_{d}} \geq \alpha$ and $ d=\dim H $, then it is well-known that for every $A\subseteq H$ and $\delta>0$,
\begin{equation} \label{visual-ineq0}
\cN_{\delta}(A)\ll_d \alpha^{-d} \cN_{\delta}(\pi_{L_{1}}A) \dotsm \cN_{\delta}(\pi_{L_{d}}A).
\end{equation}

As a corollary of the effective localised regularised Brascamp-Lieb estimate from \Cref{pr:rlub}, we can generalise the above inequality to arbitrary configurations of subspaces.

\begin{thm}[Visual inequality] \label{thm-visual-ineq}
Let $\sD=((\pi_{H_j})_{j\in J}, (q_{j})_{j\in J})$ be a Brascamp-Lieb datum made of orthogonal projectors $\pi_{H_j} : H \to H_j$ where $H_j \in \Gr(H)$.
Set $d = \dim H$.
Assume $\sD$ is $(\bba, \beta)$-perceptive for some $\bba = (\alpha_j)_{j} \in (0,1)^J$, $\beta\in \R_{\geq 0}$.
Then for every $\delta\in (0,1)$, every subset $A \subset B^H_{1}$, we have
\begin{equation} \label{eq-thm-visual-ineq}
\cN_{\delta}(A)\leq C \delta^{-\beta} \prod_{j \in J} \alpha_j^{-q_j \dim H_j} \prod_{j \in J} \cN_{\delta}\bigl(\pi_{H_{j}}A\bigr)^{q_{j}}
\end{equation}
where $0< C \ll_{d} O_{d}(1)^{\sum_{j}q_{j}}(1+\sum_{j \in J}q_{j})^{\frac{\sA(\sD)-d+\beta}{2}} \sE(\sD) $.
\end{thm}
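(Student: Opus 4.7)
The plan is to apply the localised regularised Brascamp-Lieb inequality of \Cref{pr:rlub} to well-chosen test functions and localisation/regularisation parameters, and then convert the resulting integral inequality into a covering-number inequality via a discretisation of $A$ by a maximal $\delta$-separated subset.

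Concretely, I would take the localisation parameter $T = \Id_H$, so that $\chi_T(x) = e^{-\pi \abs{x}^2} \geq e^{-4\pi}$ on the ball $B_2^H \supset A + B_\delta^H$, and regularisation parameters $R_j = \delta^{-2}\Id_{H_j}$, so that $\sN_{R_j}$ is a Gaussian essentially concentrated at scale $\delta$, with pointwise lower bound $\sN_{R_j}(z) \geq \delta^{-\dim H_j} e^{-4\pi}$ for $\abs{z} \leq 2\delta$.  For each $j$, let $\cE_j \subset \pi_{H_j}A$ be a $\delta$-net realising $\cN_\delta(\pi_{H_j}A)$, and set $f_j = \sN_{R_j} * \mu_j$ with $\mu_j = \sum_{y \in \cE_j} \delta_y$.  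Then $f_j$ is of type $R_j$ and $\int_{H_j} f_j = \cN_\delta(\pi_{H_j}A)$.

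For the lower bound on the left-hand side, let $A' \subset A$ be a maximal $\delta$-separated subset, so that the balls $B(x, \delta/2)$ for $x \in A'$ are pairwise disjoint and $\#A' \gtrsim_d \cN_\delta(A)$.  For $y \in B(x, \delta/2)$ with $x \in A'$ and any $j$, one has $\abs{\pi_{H_j}y - y^{j}} \leq 3\delta/2$ for some $y^j \in \cE_j$ close to $\pi_{H_j}x$, hence $f_j(\pi_{H_j}y) \geq \delta^{-\dim H_j} e^{-4\pi}$.  Integrating over $\bigsqcup_{x \in A'} B(x, \delta/2)$ produces
\[
\int_H \chi_T \prod_{j} (f_j \circ \pi_{H_j})^{q_j} \gtrsim_d e^{-4\pi(1 + \sum_j q_j)} \delta^{d - \sA(\sD)} \cN_\delta(A).
\]

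Next, I invoke \Cref{pr:rlub} on $(\sD, \bbR, T)$. Since each $\pi_{H_j}$ is an orthogonal projector onto $H_j$, its nonzero singular values all equal $1$, so $\rk_{\alpha_j}(\pi_{H_j}) = \dim H_j$ for $\alpha_j \in (0,1)$, and the perceptivity hypothesis transfers as required.  A direct computation gives
\[
N(\sD, \bbR, T) = \bigl\lVert \Id_H + \delta^{-2} \textstyle\sum_{j} q_j \pi_{H_j} \bigr\rVert \leq \delta^{-2}\bigl(1 + \textstyle\sum_j q_j\bigr)
\]
for $\delta \leq 1$, while $\norm{T^{-1}} = 1$.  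Note also that applying $(\bba,\beta)$-perceptivity to $W = H$ shows $\sA(\sD) - d + \beta \geq 0$, so that the $N(\sD,\bbR,T)$ factor in \eqref{eq:rlub} contributes at most $\delta^{-(\sA(\sD)-d+\beta)} (1 + \sum_j q_j)^{(\sA(\sD)-d+\beta)/2}$.

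Combining the two estimates, the $\delta$ powers collapse to $\delta^{d - \sA(\sD)} \cdot \delta^{-(\sA(\sD)-d+\beta)} \cdot \delta^{\sA(\sD)-d}$ — wait, more carefully: the lower bound contributes a factor $\delta^{d-\sA(\sD)}$ on the left, which moves to give $\delta^{\sA(\sD)-d}$ on the right, and combined with $\delta^{-(\sA(\sD)-d+\beta)}$ from the upper bound on $\BL(\sD,\bbR,T)$ this yields exactly $\delta^{-\beta}$, as required by \eqref{eq-thm-visual-ineq}.  The prefactor collects $d^{\sA(\sD)/2}$, $\sE(\sD)$, $(1 + \sum_j q_j)^{(\sA(\sD)-d+\beta)/2}$, and $e^{4\pi \sum_j q_j}$, which fits in the claimed form $O_d(1)^{\sum_j q_j}(1 + \sum_j q_j)^{(\sA(\sD)-d+\beta)/2}\sE(\sD)$.

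The only mildly delicate step is the lower bound on the integrand: one must choose the scales of $T$ and $R_j$ so that $\chi_T$ and each $f_j \circ \pi_{H_j}$ are simultaneously bounded below by an absolute constant times their natural size on a uniformly thick neighbourhood of $A'$, without losing $\delta$-factors in the wrong direction.  Once the scales $T = \Id$ and $R_j = \delta^{-2}\Id$ are fixed, everything else reduces to bookkeeping of $\delta$-powers.
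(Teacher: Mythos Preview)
Your plan is correct and follows essentially the same route as the paper: localise with $T=\Id_H$, regularise at scale $\delta$ via $R_j=\delta^{-2}\Id_{H_j}$, apply \Cref{pr:rlub}, and read off the $\delta$-powers. The only cosmetic difference is the choice of test functions: the paper takes $f_j=\1_{\pi_{H_j}(A)}*\sN_{R_j}$ after thickening $A$ to a union of $\delta$-balls, whereas you use $f_j=\sN_{R_j}*\sum_{y\in\cE_j}\delta_y$ with $\cE_j$ a $\delta$-net of $\pi_{H_j}A$ and a maximal $\delta$-separated subset of $A$; both give $\int f_j\asymp\cN_\delta(\pi_{H_j}A)$ and the same pointwise lower bound on the integrand. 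Your explicit remark that $(\bba,\beta)$-perceptivity applied to $W=H$ forces $\sA(\sD)-d+\beta\geq 0$ (so the $N(\sD,\bbR,T)$ factor can be bounded above) is a point the paper leaves implicit.
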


In the above, $B^H_{1}$ refers to the closed centered unit ball in $H$.
\begin{remark}
The inequality \eqref{visual-ineq0} is covered by \Cref{thm-visual-ineq}, taking $J=\{1, \dotsc, d\}$ and $\ell_{j}=\pi_{L_{j}}$, $q_{j}=1$ for $1\leq j\leq d$. Indeed, writing $\alpha^{\otimes d}=(\alpha, \dots, \alpha)$ the $d$-tuple with all entries equal to $\alpha$,  we note that in this case  $\sD$ is $(O_{d}(1)\alpha^{\otimes d})$-perceptive (so $\beta=0$), and $\sA(\sD)=d$, $\sE(\sD)=1$, whence the above constant $C$ is $O_{d}(1)$.
\end{remark}

\begin{remark}
The lack of perceptivity is incarnated by $\beta$. Its role in the visual inequality can be understood as follows.
If in \eqref{visual-ineq0}, we wish to remove the first $k$ projectors ($k \geq 0$), we may only guarantee
$$\cN_{\delta}(A)\ll_{d} \alpha^{-(d-k)} \delta^{-k}\cN_{\delta}(\pi_{L_{k+1}}A) \dotsm \cN_{\delta}(\pi_{L_{d}}A)$$
as can be seen by taking $A$ of the form $A=B^{\Span(e_{1}, \dots, e_{k})}_{1}\times A'$ with $A'\subseteq \Span(e_{k+1}, \dots, e_{d-k})$.
This observation is reflected by the fact that the datum $\sD=((\pi_{L_{j}})_{k+1\leq j\leq d}, (1, \dotsc, 1) )$ is $(O_{d}(1)\alpha^{\otimes (d-k)}, k)$-perceptive.
\end{remark}

\bigskip
\noindent{\bf   Perturbed Brascamp-Lieb theorem and multilinear Kakeya inequalities.}
The upper bound of \Cref{pr:rlub} can be put into Zorin-Kranich's machine~\cite[Theorem 1.3]{Zorin} to derive effective perturbed Brascamp-Lieb inequalities and consequently multilinear Kakeya inequalities.
The effectiveness of \Cref{pr:rlub} allows us to control quantities appearing in the statements of such inequalities that were previously known to depend on the Brascamp-Lieb datum.
More precisely, previously, these quantities would depend obscurely on the Brascamp-Lieb datum, while now we know how to control them in terms of the parameters $(\bba,\beta)$ of perceptivity.
Such quantities include
\begin{enumerate}
\item the constant $\delta$ in Zhang's endpoint perturbed Brascamp-Lieb inequality~\cite[Theorem 1.11]{Zhang18} (let us mention here that this theorem generalises the celebrated multilinear Kakeya estimates of Bennett-Carbery-Tao~\cite[Theorem 1.15]{BCT06} and of Guth~\cite[Theorem 1.3]{Guth10}).
\item the constant $\nu$ and the constant $C_\epsilon$ in Maldague's generalised multilinear Kakeya inequality~\cite[Theorem 1.2]{Maldague22}.
\end{enumerate}

$$\rule{2cm}{0.4pt}$$

\bigskip

\noindent{\bf On the terminology of perceptivity.}
In the literature, condition \eqref{eq:sumrk} usually does not have a name.
We propose the terminology  of \emph{algebraic perceptivity}, and later on of \emph{metric perceptivity} for its quantified version. ``Algebraic'' refers  to linear algebra, while ``metric'' refers to the additional involvement of the Euclidean structure. The term perceptivity is motivated by the intuition that for a Brascamp-Lieb datum $\sD$ to reflect   features of $H$ without loss, we need at least $\sum q_{j} \dim \ell_{j}(H) \geq \dim H$.
Here, condition \eqref{eq:sumrk} requires that this also holds in restriction to subspaces, in other words, $\sD$ perceives all subspaces.
In \Cref{thm-visual-ineq}, the visual lexical field arises naturally: in order to perceive the size of a set, we only need to know its projection via a Brascamp-Lieb datum satisfying a perceptivity condition in the  sense defined previously.

\bigskip
\noindent{\bf Acknowledgements.}
The authors thank Jonathan Bennett,  Shukun Wu, and Ruixiang Zhang for useful comments.

\bigskip
\noindent{\bf Formalised proof.}
The central component of this work, \Cref{pr:rlub}, has been formalised in LEAN by Project Numina (\url{https://projectnumina.ai/}).
The source code is available  at \url{https://github.com/project-numina/BrascampLieb}.
We are grateful to Project Numina for their  support.

\section{Essential rank and perceptivity} \label{Sec-carac-rank-percep}

In this section, we clarify the notions of essential rank and perceptivity by presenting several properties and characterisations.

\subsection{Essential rank}

\bigskip
We let $H, H'$ denote Euclidean spaces. Recall $\Gr(H)$ denotes the Grassmanian of $H$, and $\Ell(H,H')$ is the set of linear maps from $H$ to $H'$.
We first record a straightforward continuity result for the essential rank.
\begin{lemma}
\label{lm-rank-cont}
The map
\[
\begin{array}{ccc}
\R_{\geq 0} \times \Ell(H,H') \times \Gr(H) & \to & \N,\\
(\alpha, \ell, W) & \mapsto & \rk_\alpha(\ell \mid W)
\end{array}
\]
is lower semi-continuous.
That is, for any $k \in \N$, the subset of triples $(\alpha, \ell, W)$ such that $\rk_\alpha(\ell \mid W) \geq k$ is open.
\end{lemma}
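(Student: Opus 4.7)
The plan is to reduce the semi-continuity to a variational characterization of essential rank. By the Courant--Fischer min-max principle, the $k$-th largest singular value of $\ell|_W$ equals $\max_{V \subseteq W,\ \dim V = k}\ \min_{v \in V,\ \|v\|=1} \|\ell v\|$. Consequently, for every $k \in \N$,
\[
\rk_\alpha(\ell \mid W) \geq k \iff \exists V \in \Gr(H),\ V \subseteq W,\ \dim V = k,\ \|\ell v\| > \alpha \|v\|\ \forall v \in V \setminus \{0\}.
\]
It then suffices to show that the right-hand side is open in $(\alpha, \ell, W)$.

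To that end, fix a triple $(\alpha_0, \ell_0, W_0)$ with $\rk_{\alpha_0}(\ell_0 \mid W_0) \geq k$, choose a witnessing $V_0 \in \Gr(W_0)$ of dimension $k$, and by compactness of the unit sphere of $V_0$ extract $\eta > 0$ with $\|\ell_0 v\| \geq (\alpha_0 + 2\eta)\|v\|$ for all $v \in V_0$. The next step is to transport $V_0$ into a nearby subspace of $W$. Since $\Gr(H) = \bigsqcup_d \Gr_d(H)$ is a disjoint union of clopen components, $\dim W$ is locally constant, so I may assume $\dim W = \dim W_0$. Let $\pi_W : H \to W$ denote orthogonal projection. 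A standard estimate shows that $\pi_W|_{V_0}$ converges to the inclusion $V_0 \hookrightarrow H$ in operator norm as $W \to W_0$ (for instance, using the local graph parametrization of the Grassmannian around $W_0$). In particular, for $W$ in a neighborhood of $W_0$, $\pi_W|_{V_0}$ is injective and $V := \pi_W(V_0) \in \Gr_k(W)$, with the property that every unit vector of $V$ is of the form $\pi_W(u)/\|\pi_W(u)\|$ for some unit $u \in V_0$, hence arbitrarily close to $u$ as $W \to W_0$.

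Combining this transport with the joint continuity of $(\ell, v) \mapsto \|\ell v\|$ and with the initial gap estimate on $V_0$, I get that $\|\ell v\| \geq (\alpha_0 + \eta)\|v\|$ for all $v \in V$ when $(\ell, W)$ is close enough to $(\ell_0, W_0)$; shrinking further in $\alpha$ around $\alpha_0$ yields $\|\ell v\| > \alpha \|v\|$, whence $\rk_\alpha(\ell \mid W) \geq k$ on a neighborhood of $(\alpha_0, \ell_0, W_0)$.

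The main technical point is the perturbation estimate transporting $V_0 \subseteq W_0$ to a nearby $V \subseteq W$; every other step is a routine continuity argument. A slicker alternative is to observe directly that the $k$-th singular value $\sigma_k(\ell|_W)$ depends continuously on $(\ell, W)$ — by choosing a continuously varying orthonormal basis of $W$ in a chart of $\Gr(H)$ and invoking the $1$-Lipschitz continuity of singular values in operator norm — and then use that $\rk_\alpha(\ell \mid W) \geq k$ iff $\sigma_k(\ell|_W) > \alpha$.
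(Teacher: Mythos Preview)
Your proof is correct. Note that the paper does not actually supply a proof of this lemma: it merely records it as ``a straightforward continuity result'' and moves on. Your argument therefore fills in what the authors chose to omit.

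Of the two routes you sketch, the second (the ``slicker alternative'') is almost certainly what the authors had in mind: $\rk_\alpha(\ell \mid W) \geq k$ is equivalent to $\sigma_k(\ell_{|W}) > \alpha$, and $(\ell,W) \mapsto \sigma_k(\ell_{|W})$ is continuous because singular values are $1$-Lipschitz in operator norm and the restriction $\ell_{|W}$ varies continuously with $(\ell,W)$ in any local chart of the Grassmannian. Your first, more hands-on argument via Courant--Fischer and the transport $V_0 \mapsto \pi_W(V_0)$ is also fine, and has the minor advantage of being self-contained (it does not invoke Weyl-type perturbation bounds as a black box); but it is longer than the result warrants, which is presumably why the paper skips it entirely.
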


Next, we give an equivalent definition of the essential rank.
For a Euclidean space $W$ and a radius $\rho \geq 0$, we denote by $B_\rho^W$ the \emph{closed} centered ball of radius $\rho$ in $W$.
In particular, $B^W_0 = \{0\}$.
\begin{lemma}
\label{lm:arank}
For $\alpha \in \R_{\geq 0}$, $\ell \in \Ell(H,H')$,  $W \in \Gr(H)$, we have
\[
\rk_{\alpha}(\ell \mid W) = \min \setBig{\dim E \,:\, E \in \Gr(H') \text{ with } \ell \bigl(B^W_1\bigr) \subseteq B^{H'}_\alpha + E}.
\]
\end{lemma}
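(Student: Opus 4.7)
The plan is to use the singular value decomposition of the restriction $\ell_{\mid W} : W \to H'$ to handle both directions uniformly. Fix orthonormal families $(u_1, \dotsc, u_r)$ in $(\ker \ell_{\mid W})^\perp \subseteq W$ and $(v_1, \dotsc, v_r)$ in $H'$ such that $\ell u_i = \sigma_i v_i$, where $\sigma_1 \geq \dotsb \geq \sigma_r > 0$ are the nonzero singular values of $\ell_{\mid W}$. Set $k = \rk_\alpha(\ell \mid W)$, so that $\sigma_1, \dotsc, \sigma_k > \alpha \geq \sigma_{k+1}$ (with the convention $\sigma_i = 0$ for $i > r$). The proof then reduces to comparing $\dim E$ to $k$.

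For the inequality $\leq$, I would take $E := \Span(v_1, \dotsc, v_k) \in \Gr(H')$, which has dimension $k$. For any $x \in B^W_1$, the component of $\ell x$ orthogonal to $E$ equals $\sum_{i > k}\sigma_i \langle x, u_i\rangle v_i$, whose squared norm is at most $\alpha^2 \sum_{i > k}\langle x, u_i\rangle^2 \leq \alpha^2 \|x\|^2 \leq \alpha^2$. Hence $\ell(B^W_1) \subseteq B^{H'}_\alpha + E$, exhibiting an admissible $E$ of dimension $k$.

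For $\geq$, I would first rephrase the geometric containment as an operator-norm bound: the inclusion $\ell(B^W_1) \subseteq B^{H'}_\alpha + E$ is equivalent to $\|\pi_{E^\perp} \circ \ell_{\mid W}\| \leq \alpha$, since a point $y \in H'$ lies in $B^{H'}_\alpha + E$ if and only if $\|\pi_{E^\perp}(y)\| \leq \alpha$. I would then invoke the Eckart--Young (min-max) characterisation of singular values: for any linear map $A : W \to H'$ and any integer $m \geq 0$,
\[ \sigma_{m+1}(A) \;=\; \min_{F \in \Gr(H'),\, \dim F \leq m}\; \|\pi_{F^\perp} \circ A\|. \]
Applied to $A = \ell_{\mid W}$ and $m = \dim E$, this yields $\sigma_{\dim E + 1}(\ell_{\mid W}) \leq \alpha$, meaning that $\ell_{\mid W}$ has at most $\dim E$ singular values strictly greater than $\alpha$; that is, $k \leq \dim E$, as desired.

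The argument has no substantive obstacle beyond bookkeeping: the upper bound is a direct SVD truncation, and the lower bound is essentially Eckart--Young once the geometric condition has been restated as an operator-norm bound. The only minor care needed is to keep the strict inequality ``$>\alpha$'' appearing in the definition of $\rk_\alpha$ synchronised with the non-strict ``$\leq \alpha$'' that naturally shows up on the image side.
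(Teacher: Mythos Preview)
Your proof is correct and follows essentially the same route as the paper's: both use the singular value decomposition of $\ell_{\mid W}$ and take $E = \Span(v_1,\dotsc,v_k)$ for the upper bound. For the lower bound the paper argues directly that $\ell(B^W_1)$ contains a ball of radius $\sigma_k > \alpha$ in a $k$-dimensional subspace of $H'$, which cannot fit inside $B^{H'}_\alpha + E$ when $\dim E < k$, whereas you package the same observation via the Eckart--Young/min-max identity; the two arguments are equivalent.
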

\begin{remark}
We see from the proof that the above minimum is realized by some $E$ of the form $E=\ell(V)$ where $V\in\Gr(W)$ satisfies $\ell_{|W}(V^\perp)\subseteq E^\perp$ and $\|\ell_{|W\cap V^\perp}\|\leq \alpha$.
\end{remark}

\begin{proof}
We need to show that $r=\widetilde r$ where
\[
r:=\rk_\alpha(\ell_{|W})  \quad \quad \widetilde{r}:=\min \setBig{\dim E \,:\, E \in \Gr(H') \text{ with } \ell \bigl(B^W_1\bigr) \subseteq B^{H'}_\alpha + E}.
\]

By the singular decomposition of $\ell_{|W} : H \to H'$, there is an orthonormal basis $(w_1, \dotsc, w_k)$ of $W$ such that the $(\ell w_i)_{1\leq i \leq k}$ are pairwise orthogonal and
\[
\sigma_1 := \norm{\ell w_1} \geq \dots \geq \sigma_k :=\norm{\ell w_k}
\]
are the singular values of $\ell_{|w}$, decreasingly ordered.
By definition of the essential rank $r$, we have $\sigma_i > \alpha$ for $1 \leq i \leq r$ while $\sigma_i \leq \alpha$ for $r + 1 \leq i \leq k$.

On the one hand, for $E_0 := \Span\{\ell w_1, \dotsc, \ell w_r \}$, we have
\(
\ell(B_1^W) \subset B^{H'}_\alpha + E_0,
\)
leading to $\widetilde r \leq \dim E_0 = r $.

On the other hand, we observe that
$$\ell(B^W_{1})\supseteq \ell(B^{\Span(w_{1}, \dots, w_{r})}_{1}) \supseteq B^{E_{0}}_{\sigma_{r}}.$$
As $\sigma_{r}>\alpha$, we obtain that $\ell(B^W_{1})$ cannot be included in  $B_\alpha^{H'} + E$ for some $E\in \Gr(H')$ with dimension $\dim E<\dim E_{0}$. Therefore $\widetilde{r}\geq r$.
\end{proof}

\bigskip

\noindent{\bf The case of orthogonal projectors.}
We present further characterisations of the essential rank in the setting of orthogonal projectors.
The next lemma provides a hands-on description in the rank-one case.

\begin{lemma}
 \label{ex-essrank-lines}
Consider a line $L =\R u\in \Gr(H)$  with $\|u\|=1$. For all $W \in \Gr(H)$,  we have
\[
\rk_\alpha(\pi_L \mid W) = \begin{cases}
1 \quad \text{if $\norm{\pi_W(u)} > \alpha$,}\\
0 \quad \text{otherwise.}
\end{cases}
\]
\end{lemma}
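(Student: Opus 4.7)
The plan is to observe that $\pi_L|_W$ has rank at most one (its image lies in the one-dimensional line $L$), so $\rk_\alpha(\pi_L \mid W) \in \{0,1\}$. Consequently, the claim reduces to showing
\[
\rk_\alpha(\pi_L \mid W) = 1 \iff \norm{\pi_L|_W}_{\mathrm{op}} > \alpha \iff \norm{\pi_W(u)} > \alpha,
\]
where the first equivalence is just the definition of $\alpha$-essential rank for a rank-$\leq 1$ operator (whose unique potentially nonzero singular value equals its operator norm).

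To compute $\norm{\pi_L|_W}_{\mathrm{op}}$, I would write for any $w \in W$ that $\pi_L(w) = \langle w, u\rangle u$, so $\norm{\pi_L(w)} = \abs{\langle w, u\rangle} = \abs{\langle w, \pi_W(u)\rangle}$ using that $w \in W$ and $\pi_W$ is self-adjoint. By Cauchy-Schwarz, this is bounded by $\norm{w}\norm{\pi_W(u)}$, with equality attained when $w$ is colinear with $\pi_W(u)$ (provided $\pi_W(u) \neq 0$; if $\pi_W(u) = 0$ then $\pi_L|_W \equiv 0$ trivially). Hence $\norm{\pi_L|_W}_{\mathrm{op}} = \norm{\pi_W(u)}$, which completes the argument.

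There is no real obstacle: everything is a one-line check once the observation that $\dim L = 1$ is used to reduce essential rank to operator norm. An alternative, perhaps more in the spirit of \Cref{lm:arank}, would be to take $E = \{0\}$ as a witness when $\norm{\pi_W(u)} \leq \alpha$ (yielding $\pi_L(B_1^W) \subseteq B_\alpha^H$), and to verify in the opposite case that $\pi_L(B_1^W) = [-\norm{\pi_W(u)}, \norm{\pi_W(u)}]u$ cannot be contained in $B_\alpha^H + \{0\}$, forcing $\rk_\alpha(\pi_L \mid W) \geq 1$.
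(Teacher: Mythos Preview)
Your argument is correct and follows essentially the same route as the paper: reduce to $\rk_\alpha(\pi_L\mid W)\in\{0,1\}$, identify the unique possible singular value with the operator norm $\norm{(\pi_L)_{|W}}$, and compute the latter as $\sup_{w\in B_1^W}\abs{\langle u,w\rangle}=\norm{\pi_W(u)}$. You merely spell out the Cauchy--Schwarz step that the paper leaves implicit.
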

\begin{proof}
 Note $\rk_\alpha(\pi_L \mid W)\in \{0, 1\}$ and is equal to $1$ if and only if $\|(\pi_{L})_{|W}\|>\alpha$, i.e. there exists $w\in W$ with $\|w\|\leq 1$ such that $|\langle u, w \rangle|>\alpha$. But $\sup_{w \in B^W_{1}}|\langle u, w \rangle|= \|\pi_{W}(u)\|$, whence the result.
\end{proof}

We now turn to the case of projectors of arbitrary rank.
 Fix a Riemannian metric on $\Gr(H)$ which is invariant by the group of isometries of $H$.
We let $\dist(\cdot, \cdot)$ denote the corresponding distance on $\Gr(H)$, and for $r\geq0, W\in \Gr(H)$, we write $B_{r}(W)$ for the closed ball or radius $r$ and center $W$.
By convention, if $V, W \in \Gr(H)$ have different dimensions, then $\dist(V,W) = +\infty$.

\begin{lemma} \label{eq:comp sR}
There exists a constant $C>1$, depending only on the choice of  metric on $\Gr(H)$, such that for all $V,W \in \Gr(H)$, $\alpha \in [0,C^{-1})$,
\begin{equation}
\min_{W' \in B_{C\alpha}(W) } \dim \pi_{V}(W')
\leq
\rk_\alpha(\pi_{V} \mid W)
\leq \min_{W' \in B_{ \alpha/C}(W) } \dim \pi_{V}(W').
\end{equation}
\end{lemma}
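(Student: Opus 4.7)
The idea is to analyse both inequalities through the singular value decomposition of $(\pi_{V})_{|W}$. Let $k = \dim W$ and $r = \rk_\alpha(\pi_{V} \mid W)$. As in the proof of \Cref{lm:arank}, choose an orthonormal basis $(w_1, \dotsc, w_k)$ of $W$ such that the vectors $\pi_{V} w_i$ are pairwise orthogonal with decreasing singular values $\sigma_i := \norm{\pi_{V} w_i}$; in particular $\sigma_i > \alpha$ for $i \leq r$ and $\sigma_i \leq \alpha$ for $i > r$. Since the chosen Riemannian metric on the compact space $\Gr(H)$ is equivalent, up to some constant $c_0 \geq 1$, to the operator-norm metric $(V,W) \mapsto \norm{\pi_V - \pi_W}$ (a standard consequence of compactness and $H$-isometry invariance), it suffices to prove the two inequalities for the latter metric, the value of $C$ then being determined by $c_0$.

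For the first inequality, I would exhibit an explicit $W'$ with $\dim \pi_{V}(W') \leq r$ lying in a ball of radius $O(\alpha)$ around $W$. For $i > r$, set $w'_i := w_i - \pi_{V} w_i \in V^\perp$, so that $\norm{w_i - w'_i} = \sigma_i \leq \alpha$, and let
\[
W' := \Span(w_1, \dotsc, w_r, w'_{r+1}, \dotsc, w'_k).
\]
Using that the $\pi_V w_i$ are pairwise orthogonal, a short calculation shows $\{w_1, \dotsc, w_r, w'_{r+1}, \dotsc, w'_k\}$ is an orthogonal family with $\norm{w'_i}^2 = 1 - \sigma_i^2 > 0$; hence $\dim W' = k$. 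By construction, $\pi_{V}(W') = \Span(\pi_V w_1, \dotsc, \pi_V w_r)$ has dimension $r$. Since $\Span(w_1, \dotsc, w_r)$ sits in both $W$ and $W'$ as an orthogonal direct summand, the principal angles between $W$ and $W'$ reduce to those between $\Span(w_{r+1}, \dotsc, w_k)$ and $\Span(w'_{r+1}, \dotsc, w'_k)$, whose sines a direct computation identifies as the $\sigma_i$, $i > r$, all bounded by $\alpha$. Hence $\norm{\pi_W - \pi_{W'}} \leq \alpha$, so $W' \in B_{C\alpha}(W)$ as soon as $C \geq c_0$.

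For the reverse inequality, I would show that every $W' \in B_{\alpha/C}(W)$ satisfies $\dim \pi_{V}(W') \geq r$, by a perturbation argument. Put $W_0 := \Span(w_1, \dotsc, w_r)$, so $(\pi_V)_{|W_0}$ has smallest singular value $\sigma_r > \alpha$. The bound $\norm{\pi_W - \pi_{W'}} \leq c_0 \alpha/C$ implies (for $C$ large) that $\pi_{W'}$ is injective on $W$, so $W'_0 := \pi_{W'}(W_0) \subset W'$ has dimension $r$; a standard principal-angle estimate then gives $\norm{\pi_{W_0} - \pi_{W'_0}} \leq c_1 \alpha/C$ for some $c_1 = c_1(c_0)$. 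For a unit vector $w' \in W'_0$, setting $w := \pi_{W_0}(w') \in W_0$ yields $w \perp (w' - w)$, hence $\norm{w' - w} \leq c_1 \alpha/C$ and $\norm{w} \geq \sqrt{1 - (c_1 \alpha/C)^2}$, whence
\[
\norm{\pi_V w'} \geq \sigma_r \norm{w} - \norm{w' - w} \geq \sigma_r \sqrt{1 - (c_1\alpha/C)^2} - c_1 \alpha/C.
\]
Since $\sigma_r > \alpha$ and $\sigma_r \leq 1$, taking $C$ large enough (say $C \geq 2 c_1$) makes this strictly positive; thus $(\pi_V)_{|W'_0}$ is injective and $\dim \pi_V(W') \geq r$.

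The main obstacle is uniformity in $\sigma_r$: one only knows $\sigma_r > \alpha$, which may be nearly an equality, so every perturbation bound in the upper-bound argument must be strictly less than $\alpha$. This is handled by writing all perturbation bounds as multiples of $\alpha/C$, which becomes arbitrarily small compared to $\alpha$ as $C$ grows, uniformly in $V$, $W$, and $\alpha \in [0, C^{-1})$.
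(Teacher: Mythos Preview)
Your argument is correct. The construction of the perturbed subspace $W'$ for the first inequality coincides with the paper's, but the two proofs diverge in how they estimate distances and, more substantially, in how they handle the second inequality.

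For the first inequality, the paper controls $\dist(W,W')$ by an abstract Lipschitz argument: the map sending a small perturbation $(v_{r+1},\dots,v_k)\in H^{k-r}$ to $\dist\bigl(\Span\{w_1,\dots,w_r,w_{r+1}+v_{r+1},\dots,w_k+v_k\},W\bigr)$ is locally $C$-Lipschitz, and isometry-invariance of the metric makes the constant uniform over $\Gr(H,k)$. Your explicit principal-angle computation is sharper (it yields $\norm{\pi_W-\pi_{W'}}\leq\alpha$ exactly, with no dimensional factor) but requires verifying the orthogonality of the family $\{w_1,\dots,w_r,w'_{r+1},\dots,w'_k\}$ and identifying the principal angles.

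For the second inequality the approaches are genuinely different. The paper invokes \Cref{lm:arank} directly: from $\sup_{w\in B_1^W}\dist(w,W')\leq C\,\dist(W,W')\leq\alpha$ and the $1$-Lipschitz bound on $\pi_V$ one gets $\pi_V(B_1^W)\subset \pi_V(W')+B_\alpha^{H}$, and then \Cref{lm:arank} immediately gives $r\leq\dim\pi_V(W')$. No singular-value perturbation is needed. Your route---projecting $W_0=\Span(w_1,\dots,w_r)$ into $W'$ and checking that $\pi_V$ stays injective on the image---is self-contained and quantitatively explicit, but longer. The paper's argument is worth noting because it shows that the covering characterisation of essential rank in \Cref{lm:arank} already encodes the perturbation stability you establish by hand.
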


\begin{proof}
We use the shorthand $\ell := \pi_{V}$ and $r=\rk_{\alpha}(\ell_{|W})$.

We start with the inequality on the left. Recall the basis $(w_{1}, \dots, w_{k})$ of $W$ from the proof of \Cref{lm:arank}.
Consider
\[
W'_{0} = \Span\{w_1, \dotsc, w_r, w_{r+1} - \ell w_{r+1}, \dotsc, w_k - \ell w_k \}.
\]
Using that $\ell$ is a projector, we have $ \dim \ell (W'_{0})\leq r$.
To bound $\dist(W'_{0},W)$, observe that the map from $H^{k - r}$ to $\R$ given by
\[
(v_{r+1}, \dotsc, v_k) \mapsto \dist(\Span\{w_1, \dotsc, w_r, w_{r+1} + v_{r+1}, \dotsc, w_k + v_k\}, W)
\]
is $C$-Lipschitz continuous on the ball of radius $C^{-1}$ around $0 \in H^{k-r}$ for some constant $C > 1$.
Moreover, since $\dist$ is invariant under the group of isometries and the latter acts transitively on $\Gr(H,k)$, the constant $C$ is uniform in $W \in \Gr(H,k)$.
As $\max_{i>r}\|\ell w_{i}\|\leq \alpha$ by definition,   we get  $\dist(W'_{0},W) \leq  C k\alpha$ provided $k\alpha<C^{-1}$.  This shows the  inequality on the left (with $C\dim H$ in the place of $C$).

For the  inequality on the right, observe\footnote{Use a local chart around $W'$ and then obtain uniformity using the group of isometries.} that there is a constant $C>1$ such that
for all $W, W' \in \Gr(H)$,
\[
\sup_{w \in B_1^W} \dist(w,W') < C \dist(W,W').
\]
Then for every $W' \in \Gr(H)$ such that $\dist(W,W') \leq \alpha/C$, using that $\ell$ is $1$-Lipschitz, we have
\[
\ell(B_1^W) \subset \ell(W') + B^{H}_{\alpha}.
\]
In view of \Cref{lm:arank}, this implies
\[
r\leq \dim \ell(W'),
\]
finishing the proof of the second inequality.
\end{proof}

\subsection{Perceptivity} \label{Sec-percep}


The characterisations of essential rank discussed above immediately provide further characterisations of perceptivity.

\begin{example}
As a corollary of \Cref{ex-essrank-lines},  if the tuple $(\ell_j)_{j \in J}$ in the datum $\sD$ consists of orthogonal projectors of rank one, then for $\bba\in \R_{\geq0}
^J$, the $\bba$-essential acuity within $W \in \Gr(H)$ is given by
$$\sA_{\bba}(\sD \mid W)=\sum \{q_{j}\,:\, j \in J \text{ such that }\norm{\pi_W(u_j)} > \alpha_{j}\},$$
where $u_j$ is a unit vector in the line $\ell_j(H)$.
Thus, $\alpha^{\otimes J}$-perceptivity  means in essence that for every $W \in \Gr(H)$, the sum of weights of  lines forming an angle greater than $\alpha$ with $W^\perp$ is at least $\dim W$.
This geometric criterion is simple to visualise.
\end{example}

For orthogonal projectors of arbitrary rank, the next lemma provides  useful additional insight on perceptivity. This characterisation will be exploited in \cite{BH25-walks} to derive a subcritical projection theorem from the present article.

\begin{lemma}
\label{percep-orthog-proj}
There exists a constant $C > 1$ depending only on the choice of  metric on $\Gr(H)$ such that the following holds.
Let $\sD=((\pi_{H_j})_{j\in J}, (q_{j})_{j\in J})$ be a globally critical Brascamp-Lieb datum made of orthogonal projectors $\pi_{H_j} : H \to H_j$ where $H_j \in \Gr(H)$.
Let $\bba = (\alpha_j)_{j \in J} \in [0, C^{-1})^J$ and $\beta \in \R_{\geq 0}$.

If for every $W \in \Gr(H)$,
\begin{equation*} 
\sum_{j \in J}  q_{j} \max_{W'\in B_{C\alpha_{j}}(W)} \frac{\dim H^{\perp}_{j} \cap W'}{\dim W} -\frac{\beta}{\dim W}\,\leq\, \sum_{j \in J}  q_{j} \frac{\dim H^{\perp}_{j}}{\dim H},
\end{equation*}
then $\sD$ is $(\bba,\beta)$-perceptive.

Conversely, if $\sD$ is $(\bba, \beta)$-perceptive, then the above holds with the condition $W'\in B_{C\alpha_{j}}(W)$ replaced by $W'\in B_{\alpha_{j}/C}(W)$.
\end{lemma}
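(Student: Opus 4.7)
The plan is to translate the essential rank into a dimension of intersection with $H_j^\perp$, then apply the two-sided estimate of \Cref{eq:comp sR}, and finally use global criticality to pass between the two normalisations.

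First, I would record the key algebraic identity for orthogonal projectors: for any $W' \in \Gr(H)$,
\[
\dim \pi_{H_j}(W') = \dim W' - \dim(W' \cap H_j^\perp),
\]
by rank-nullity applied to the restriction of $\pi_{H_j}$. Since $\dist(V, W) = +\infty$ whenever $\dim V \neq \dim W$, every $W'$ in a finite-radius ball around $W$ satisfies $\dim W' = \dim W$, and so the minimum over $W'$ of $\dim \pi_{H_j}(W')$ is $\dim W$ minus the maximum over $W'$ of $\dim(W' \cap H_j^\perp)$. Combining this with \Cref{eq:comp sR} yields the two chains
\begin{align*}
\rk_{\alpha_j}(\pi_{H_j} \mid W) &\geq \dim W - \max_{W' \in B_{C\alpha_j}(W)} \dim(W' \cap H_j^\perp),\\
\rk_{\alpha_j}(\pi_{H_j} \mid W) &\leq \dim W - \max_{W' \in B_{\alpha_j/C}(W)} \dim(W' \cap H_j^\perp).
\end{align*}

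Next, I would rewrite the right-hand side of the hypothesised inequality using the global criticality condition \eqref{eq:gc}. Since $\sum_j q_j \dim H_j = \dim H$, we have $\sum_j q_j \dim H_j^\perp = \dim H \bigl(\sum_j q_j - 1\bigr)$, so
\[
\sum_{j \in J} q_j \frac{\dim H_j^\perp}{\dim H} = \sum_{j \in J} q_j - 1.
\]
Multiplying the displayed inequality of the lemma by $\dim W$ and using this identity turns it into the clean form
\[
\sum_{j \in J} q_j \Bigl[ \dim W - \max_{W' \in B_{C\alpha_j}(W)} \dim(W' \cap H_j^\perp) \Bigr] \geq \dim W - \beta,
\]
and analogously with $\alpha_j/C$ in place of $C\alpha_j$.

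From here the proof closes by substitution. For the forward direction, the left-hand side of the last display is bounded above term-by-term by $q_j \rk_{\alpha_j}(\pi_{H_j} \mid W)$ via the first chain, giving the $(\bba,\beta)$-perceptivity inequality. For the converse, starting from $(\bba,\beta)$-perceptivity and applying the second chain bounds each $\rk_{\alpha_j}$ above by $\dim W - \max_{W' \in B_{\alpha_j/C}(W)} \dim(W' \cap H_j^\perp)$; rearranging and dividing by $\dim W$ returns the stated criterion with $B_{\alpha_j/C}(W)$. There is no real obstacle here: the content is entirely encoded in \Cref{eq:comp sR} and the scaling identity from \eqref{eq:gc}; the only mild care needed is to absorb any dimensional factors into the constant $C$, exactly as in the proof of \Cref{eq:comp sR}.
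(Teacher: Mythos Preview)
Your proposal is correct and follows essentially the same route as the paper: invoke the two-sided bound of \Cref{eq:comp sR}, convert $\dim \pi_{H_j}(W')$ into $\dim W - \dim(W'\cap H_j^\perp)$ via rank--nullity, and use global criticality to rewrite $\sum_j q_j \dim H_j^\perp / \dim H$ as $\sum_j q_j - 1$. The only cosmetic difference is that you make the rank--nullity step explicit, whereas the paper absorbs it directly into the passage from \Cref{eq:comp sR} to the displayed sufficient condition.
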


Taking $\beta=0$, we see perceptivity means in essence that the proportion of kernel $H^\perp_{j}$ intersecting any given subspace $W$ is smaller (in average) than the proportion of $H^\perp_{j}$ in the whole space $H$.

\begin{proof}[Proof of \Cref{percep-orthog-proj}]
Let us show the sufficient condition.
By (the first bound in) \Cref{eq:comp sR}, we know that in order to show that $\sD$ is $(\bba,\beta)$-perceptive, it suffices to check: $ \forall W \in \Gr(H)$,
\begin{equation}\label{eq-percep-jghhd}
  \sum_{j \in J} q_j \Bigl(\dim W - \max_{W'\in B_{C\alpha_{j}}(W)} \dim (H^\perp_{j} \cap W') \Bigr) \geq \dim W - \beta.
\end{equation}
Equation \eqref{eq-percep-jghhd} can be rewritten as
\begin{equation}\label{carac-perp2.1}
\Bigl(\sum_{j \in J}  q_{j} -1\Bigr)\dim W \geq \sum_{j \in J}  q_{j}\max_{W'\in B_{C\alpha_{j}}(W)}\dim(H^\perp_{j} \cap W') -\beta.
\end{equation}
On the other hand, the assumption of global criticality guarantees
\begin{equation}\label{carac-perp2.2}
\sum_{j \in J}  q_{j} \frac{\dim H^\perp_{j}}{\dim H} =\sum_{j \in J}  q_{j}-1
\end{equation}
and the desired inequality follows by plugging \eqref{carac-perp2.2} into \eqref{carac-perp2.1} and dividing by $\dim W$.

The proof of the necessary condition is similar, using this time the second bound from  \Cref{eq:comp sR}.
\end{proof}

\section{Upper bounds for localised regularised  data}
In this section we prove \Cref{pr:rlub}.

\subsection{Lieb's theorem}
A key ingredient is Lieb’s theorem \cite[Theorem 6.2]{Lieb}, which constitutes a central result in the theory of  Brascamp–Lieb inequalities.
It states that there is no loss in restricting the inputs $(f_j)_{j \in J}$ in the definition of the Brascamp-Lieb constant to Gaussian inputs $f_j = \chi_{A_j}$, with positive definite symmetric $A_j \in \End(H_j)$.

We are going to use the following generalisation, which concerns regularised and localised  data. It is extracted from \cite[Corollary 8.15]{BCCT08}.
\begin{thm}[Generalised Lieb's theorem {\cite{BCCT08}}] \label{general-Liebthm}
Let $(\sD,\bbR, T)$ be a localised regularised   Brascamp-Lieb datum with $\sD=((\ell_{j})_{j\in J}, (q_{j})_{j\in J})$ and $\bbR=(R_{j})_{j\in J}$.
Then
\begin{equation}\label{gaussian-upp}
\BL(\sD, \bbR, T)
= \sup_{0<A_{j}\leq R_{j}} \left(\frac{ \prod_{j}(\det A_{j})^{q_{j}}}{\det(T+\sum_{j} q_{j} \ell_{j}^*A_{j}\ell_{j} )}\right)^{1/2},
\end{equation}
where $0 < A_j \leq R_j$ means $A_j$ is ranging through all positive definite symmetric endomorphisms of $H_j$ such that $R_j - A_j$ is positive semi-definite, for each $j \in J$.
\end{thm}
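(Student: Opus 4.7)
The plan is to prove the two inequalities in \eqref{gaussian-upp} separately. The direction ``$\geq$'' is obtained by inserting explicit Gaussian test functions; the direction ``$\leq$'' is the substantive content and reduces — via an approximation argument — to the classical Lieb theorem for ordinary Brascamp-Lieb data.

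\textbf{Lower bound on $\BL$.} Fix positive definite symmetric $A_j \in \End(H_j)$ with $A_j \leq R_j$ and try $f_j = \chi_{A_j}$ as an input. I first need to check that $\chi_{A_j}$ is of type $R_j$: passing to Fourier, the Gaussian convolution identity $\sN_{A_j} = \sN_{R_j} * \sN_{B_j}$ holds whenever $B_j^{-1} := A_j^{-1} - R_j^{-1}$ is positive definite, which is equivalent to $0 < A_j \leq R_j$ (the boundary case $A_j = R_j$ being trivial with $\mu_j = \delta_0$). Since $(\chi_{A_j} \circ \ell_j)^{q_j}(x) = e^{-\pi q_j \langle x, \ell_j^* A_j \ell_j\, x \rangle}$, the integrand $\chi_T \prod_j (f_j \circ \ell_j)^{q_j}$ is a centered Gaussian with quadratic form $S := T + \sum_j q_j \ell_j^* A_j \ell_j$, whose integral equals $(\det S)^{-1/2}$. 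Combined with $\int_{H_j} \chi_{A_j} = (\det A_j)^{-1/2}$, feeding this into the definition of $\BL(\sD, \bbR, T)$ gives $\BL(\sD,\bbR,T)^2 \geq \prod_j (\det A_j)^{q_j}/\det S$, and taking the supremum over admissible $A_j$ yields the ``$\geq$'' direction.

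\textbf{Upper bound on $\BL$.} By definition, any $f_j$ of type $R_j$ takes the form $\sN_{R_j} * \mu_j$ for a finite positive Borel measure $\mu_j$. By a density/approximation argument I would reduce to $\mu_j$ being a finite sum of Dirac masses, and by homogeneity further to $\mu_j = \delta_{y_j}$, so that $f_j(\cdot) = \sN_{R_j}(\cdot - y_j)$ is a translated Gaussian. In that reduced setting, $\chi_T(x) \prod_j (f_j \circ \ell_j)^{q_j}(x)$ becomes a joint Gaussian in $(x, (y_j)_j)$; after integrating $x$ out one is left with a new Gaussian-input Brascamp-Lieb inequality on the shift spaces, to which the classical Lieb theorem \cite[Theorem 6.2]{Lieb} applies. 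Its Gaussian extremisers correspond exactly to symmetric positive definite $A_j$ satisfying $A_j \leq R_j$ — the upper bound $A_j \leq R_j$ arising precisely from the positivity of $\mu_j$ — while the fixed factor $\chi_T$ is absorbed as the additional summand $T$ inside $\det S$.

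\textbf{Main obstacle.} The delicate step is the reduction in the upper bound to translated-Gaussian inputs and then to true Gaussian extremisers, keeping careful track of the constraint $A_j \leq R_j$ throughout. This is exactly the content of \cite[Corollary 8.15]{BCCT08}, which we intend to invoke directly as a black box rather than redevelop the underlying Gaussian-saturation machinery.
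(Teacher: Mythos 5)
Your proposal is correct and matches the paper's treatment: the paper gives no proof of this statement, simply importing it as \cite[Corollary 8.15]{BCCT08}, which is exactly the black box you invoke for the substantive ``$\leq$'' direction. Your explicit Gaussian computation for the ``$\geq$'' direction is sound (modulo the harmless limiting issue when $R_j-A_j$ is singular but nonzero, handled by continuity of both sides in $A_j$), but it is already contained in the cited result.
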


Though not directly related to our task, let us mention there is an effective version of Lieb's theorem, due to Bennett-Bez-Buschenhenke-Cowling-Flock~\cite[Theorem 1.3]{BBBCF}, which estimates how close a regularized Gaussian Brascamp-Lieb constant is  to the Brascamp-Lieb constant. This plays a role  in showing nonlinear Brascamp-Lieb inequalities, another landmark result in the area.


\subsection{Proof of \Cref{pr:rlub}}
The  proof of \Cref{pr:rlub} is inspired by \cite[Section 5]{BCCT08}. The strategy is to  use the generalised Lieb theorem to restrict the estimate of $\BL(\sD, \bbR, T)$ to Gaussian inputs, and then perform explicit computations to bound the latter.

Given $a, b\in \Z$ with $a\leq b$, we use the notation $\llrr{a,b}$ to denote the set of  integers between $a$ and $b$, that is, $\llrr{a,b}=\Z\cap [a,b]$.

\begin{proof}[Proof of \Cref{pr:rlub}]

We first reduce to the case where $\alpha_j = 1$ for all $j \in J$. Indeed, note that for each $j \in J$ and every $W \in \Gr(H)$, we have
$\rk_1( \alpha_j^{-1} \ell_j \mid W) = \rk_{\alpha_j}(\ell_j \mid W)$. Therefore the datum $\sD':=\bigl((\alpha_j^{-1} \ell_j)_j, (q_j)_j\bigr)$ is $(1^{\otimes J}, \beta)$-perceptive and satisfies $\rk_1( \alpha_j^{-1} \ell_j)=\dim H_{j}$ for all $j$.  By a simple change of variables, we also have
$$\BL(\sD, \bR, T)=\prod_{j\in J} \alpha_{j}^{-q_{j}\dim H_{j}} \BL(\sD', \bR', T)$$
 where $\bR'=(\alpha_{j}^2 R_{j})_{j\in J}$, and $N(\sD, \bR, T)=N(\sD', \bR', T)$.
It  is thus sufficient to prove the theorem for $\sD=\sD'$, whence the reduction to $\bba=1^{\otimes J}$.

Now, in view of Lieb's theorem \ref{general-Liebthm}, it suffices to show
\begin{equation}
\label{eq:rlub1}
\frac{\prod_{j\in J} (\det A_{j})^{q_{j}} }{\det(T + \sum_{j \in J} q_j \ell_j^* A_{j}\ell_j)}\leq d^{\sA(\sD)}  \sE(\sD)^2 N(\sD, \bbR, T)^{\sA(\sD)-d+\beta} \norm{T^{-1}}^{\beta}
\end{equation}
for all positive definite symmetric endomorphisms $A_j \in \End(H_j)$ satisfying $A_j \leq R_j$.

Let $M := T+ \sum_{j\in J} q_{j} \,\ell^*_{j} A_{j}\ell_{j} \in \End(H)$.
It is symmetric and positive definite.
Let $(e_1, \dotsc, e_d)$ be an orthonormal basis of $H$ in which the matrix of $M$ is diagonal, with diagonal entries $\lambda_{1}\geq \dots \geq \lambda_{d}>0$.
For a set of indices $I \subset \llrr{1, d}$, we write
\[
V_I = \Span \set{ e_i : i \in I}.
\]
For $k \in \llrr{0, d-1}$, we abbreviate
\(
V_{> k} = V_{\llrr{k+1,d}}.
\)

For each $j \in J$, we construct an index subset $I_j \subset \llrr{1,d}$ satisfying
\begin{equation}
\label{eq:Ij}
\forall i \in I_{j}, \quad \dist(\ell_j e_i, \ell_j V_{I_j \cap (i,d] }) \geq \frac{1}{\sqrt{d}},
\end{equation}
\begin{equation}\label{eq:Ij2nd}
\forall i \in \llrr{1,d},\quad \dist(\ell_j e_i, \ell_j V_{I_j \cap [i,d]}) < \frac{1}{\sqrt{d}}.
\end{equation}
This construction is done inductively.
First, we put $d$ in $I_j$ if and only if $\norm{\ell_j e_d} \geq \frac{1}{\sqrt{d}}$.
This determines $I_j \cap (d-1,d]$.
Then for each integer $i < d$, starting from $d - 1$, suppose the intersection $I_j \cap (i, d]$ is determined.
We put $i$ in $I_j$ if \eqref{eq:Ij} holds, and we skip $i$ and proceed to $i - 1$ otherwise.
This procedure terminates when $i$ reaches $0$, and yields a set $I_{j}$ satisfying \eqref{eq:Ij}, \eqref{eq:Ij2nd}.

We derive further properties of the sets $I_{j}$. First, we claim that for all $k\in \llrr{0, d - 1}$, we have
\begin{equation}\label{Ij-percep-estimate}
\sum_{j\in J} q_{j} \abs{I_j \cap (k, d] }  \geq d - k - \beta.
\end{equation}
To see that, note that by \eqref{eq:Ij2nd}, we have
\begin{equation*}
\ell_j B_{1}^{V_{> k}} \subset \ell_j V_{I_j \cap (k,d]} + B^H_{1}.
\end{equation*}
In view of \Cref{lm:arank}, this implies
\begin{equation}\label{ellVk}
\rk_1(\ell_j \mid V_{> k}) \leq \dim V_{I_j \cap (k, d]} = \abs{I_j \cap (k, d]}.
\end{equation}
Summing over $j \in J$ and using the assumption of perceptivity, the claim \eqref{Ij-percep-estimate} follows.


We also see that $|I_{j}|=\dim H_{j}$. Indeed, taking $k=0$ in \eqref{ellVk} and recalling $\rk_{1}(\ell_j) = \dim H_j$ by hypothesis, we obtain $\dim H_j \leq \abs{I_j}$.
By \eqref{eq:Ij}, we have
\begin{equation}\label{eq:detellj}
\norm{\wedge_{i\in I_{j}}\ell_{j}e_{i} } = \prod_{i \in I_j} \dist\bigl(\ell_j e_i, \ell_j V_{I_j \cap (i,d]}\bigr) \geq d^{-\frac{\abs{I_j}}{2}},
\end{equation}
and in particular $\abs{I_j} \leq \dim H_j$.
Therefore $\abs{I_j}=\dim H_{j}$. Note the argument shows that $(\ell_{j}e_{i})_{i\in I_{j}}$ is a basis of $H_{j}$. By definition of $\sA(\sD)$, we also have
\begin{equation}\label{Ij-cardinal}
\sum_{j}q_{j} \abs{I_{j}} = \sA(\sD).
\end{equation}

We deduce upper bounds on the determinants of the $A_{j}$.
Given $j \in J$, and $i\in I_j$, we have
\begin{equation}\label{bnd-scalar-prod}
{\langle A_{j}\ell_{j}e_{i},  \ell_{j} e_{i}\rangle} = {\langle \ell_{j}^*A_{j}\ell_{j}e_{i}, e_{i}\rangle} \leq \frac{1}{q_j} {\langle M e_{i}, e_{i}\rangle} \leq \frac{\lambda_i}{q_j}.
\end{equation}
Employing \Cref{eq:Choleski} below, then \eqref{eq:detellj} and \eqref{bnd-scalar-prod}, we deduce
\begin{align*}
{\det A_{j}} & \leq \norm{\wedge_{i\in I_{j}}\ell_{j}e_{i} }^{-2} \prod_{i\in I_{j}}{\langle A_{j}\ell_{j}e_{i},  \ell_{j} e_{i}\rangle}\\
&\leq \Bigl(\frac{d}{q_j}\Bigr)^{\abs{I_j}} \prod_{i\in I_{j}}\lambda_{i}.
\end{align*}

Taking power $q_{j}$, then the product over $j$, and using \eqref{Ij-cardinal}, we get
\begin{equation} \label{eqproddetAj}
\prod_{j\in J} (\det A_{j})^{q_{j}} \leq
d^{\sA(\sD)}\sE(\sD)^2
\prod_{i=1}^d{\lambda_{i}}^{\sum_{j \in J} q_{j}\abs{I_{j}\cap \{i\}}}.
\end{equation}
We now bound the product on the eigenvalues $\lambda_{i}$.
Telescoping, then using the properties  \eqref{Ij-percep-estimate} and \eqref{Ij-cardinal} of $I_{j}$, then telescoping again, we have
\begin{align}
 \prod_{i=1}^d{\lambda_{i}}^{\sum_{j}q_{j}\abs{I_{j} \cap \{i\}}}
 &= \lambda_{1}^{\sum_{j}q_{j}\abs{I_j}}\prod_{k=1}^{d-1} \left(\frac{\lambda_{k+1}}{\lambda_k}\right)^{\sum_{j}q_{j}\abs{I_{j}\cap (k,d]} } \nonumber\\
 &\leq \lambda_{1}^{\sA(\sD)}\prod_{k=1}^{d-1} \left(\frac{\lambda_{k+1}}{\lambda_k}\right)^{d-k-\beta} \nonumber\\
 &=\lambda_{1}^{\sA(\sD)-d+1+\beta} \lambda_{2}\lambda_{3}\dotsm \lambda_{d-1} \lambda_{d}^{1-\beta} \nonumber\\
 &= \lambda_{1}^{\sA(\sD)-d+\beta} \lambda_{d}^{-\beta}  \det M. \label{eqprodlambdai}
 \end{align}
Equations \eqref{eqproddetAj} and \eqref{eqprodlambdai} together yield
$$
\frac{\prod_{j\in J} (\det A_{j})^{q_{j}} }{{\det M}}\leq d^{\sA(\sD)}\sE(\sD)^2
\lambda_{1}^{\sA(\sD)-d+\beta} \lambda_{d}^{-\beta}.
$$

Now since the Gaussian input $(A_j)_j$ is dominated by $(R_j)_j$
we have $\lambda_{1}\leq N(\sD, \bbR, T)$.
On the other hand, $\lambda_{d} \geq \normbig{T^{-1}}^{-1}$.
This concludes the proof of \eqref{eq:rlub1}.
\end{proof}

In the above proof, we used the following elementary lemma.
\begin{lemma}\label{eq:Choleski}
Let $H$ be a Euclidean space, let $Q\in \End(H)$ be a positive semi-definite symmetric endomorphism,  let  $(\eps_{1}, \dots, \eps_{d})$ be a basis of $H$.
Then
\begin{equation*}
\det Q \leq \|\eps_{1}\wedge \dots \wedge \eps_{d}\|^{-2}\prod_{k=1}^d \langle Q \eps_{k}, \eps_{k}\rangle.
\end{equation*}
\end{lemma}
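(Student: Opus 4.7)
The plan is to reduce the bound to the classical Hadamard inequality by factoring $Q$ through its positive square root. The first step is to write $Q = S^* S$ where $S := Q^{1/2} \in \End(H)$ is the positive semi-definite square root of $Q$. This immediately translates both sides of the desired inequality into quantities involving $S$ alone: we have $\det Q = (\det S)^2$, and for each $k$, $\langle Q\eps_k, \eps_k\rangle = \langle S\eps_k, S\eps_k\rangle = \|S\eps_k\|^2$.

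The second step is to apply Hadamard's inequality to the $d$ vectors $S\eps_1, \dots, S\eps_d$ in $H$, yielding
\[
\|S\eps_1 \wedge \dotsm \wedge S\eps_d\| \,\leq\, \prod_{k=1}^d \|S\eps_k\|.
\]
The key observation is the naturality of the top exterior power under an endomorphism: for any $T \in \End(H)$, one has $T\eps_1 \wedge \dotsm \wedge T\eps_d = (\det T)\,\eps_1 \wedge \dotsm \wedge \eps_d$. Applying this to $T = S$ rewrites the left-hand side as $|\det S| \cdot \|\eps_1 \wedge \dotsm \wedge \eps_d\|$. Squaring, substituting $|\det S|^2 = \det Q$ and $\|S\eps_k\|^2 = \langle Q\eps_k, \eps_k\rangle$, and dividing by $\|\eps_1 \wedge \dotsm \wedge \eps_d\|^2 > 0$ (nonvanishing since $(\eps_k)$ is a basis) gives the claim.

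There is no substantive obstacle: the statement is essentially a repackaging of Hadamard's inequality, and the argument goes through uniformly for positive semi-definite $Q$ since in the degenerate case $\det Q = 0$ and the right-hand side is non-negative. The only modest care is to ensure the square root $S = Q^{1/2}$ is well defined, which follows from the spectral theorem applied to the self-adjoint operator $Q$.
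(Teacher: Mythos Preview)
Your proof is correct. Both the paper and you factor $Q$ as $F^*F$ and then bound $\det Q = (\det F)^2$ against the product of the diagonal entries $\langle Q\eps_k,\eps_k\rangle = \norm{F\eps_k}^2$; the difference lies in the choice of $F$ and the handling of a non-orthonormal basis. The paper takes $F$ to be the Cholesky factor (upper triangular in an orthonormal basis), which forces a two-step argument: first establish the inequality for an orthonormal basis, then reduce the general case via a change-of-basis matrix $A$ and the identity $\det Q = (\det A)^{-2}\det(A^*QA)$. You instead take $F = Q^{1/2}$ and invoke Hadamard's inequality as a black box on the vectors $S\eps_k$, absorbing the basis distortion in one stroke through the identity $S\eps_1\wedge\dotsm\wedge S\eps_d = (\det S)\,\eps_1\wedge\dotsm\wedge\eps_d$. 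Your route is slightly more streamlined (no case split) and also handles the degenerate case $\det Q = 0$ without comment, whereas the Cholesky factorisation strictly requires $Q$ positive definite and the paper's proof tacitly relies on continuity or a separate treatment there.
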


\begin{proof}
If  $(\eps_{1}, \dots, \eps_{d})$ is an orthonormal basis, then $\|\eps_{1}\wedge \dots \wedge \eps_{d}\|=1$ and the result follows by  the Choleski decomposition, which states  that $Q=F^*F$ where $F$ is upper triangular in the basis $(\eps_{1}, \dots, \eps_{d})$. For the general case, we  write $(\eps_{1}, \dots, \eps_{d})=A(\eps'_{1}, \dots, \eps'_{d})$ where $A\in \GL_{d}(\R)$ and the $\eps'_{i}$ form an orthonormal basis. We then have
$$\det Q = (\det A)^{-2}\det A^*Q A \leq \|\eps_{1}\wedge \dots \wedge \eps_{d}\|^{-2} \prod_{k=1}^d \langle A^*Q A \eps'_{k}, \eps'_{k}\rangle,$$
hence the result.
\end{proof}

\section{Lower bound for localised regularised  data}

In this section we show \Cref{thm:loc-lb}, which states a lower bound for localised regularised   Brascamp-Lieb constants.

\begin{proof}[Proof of \Cref{thm:loc-lb}]
Consider a collection  $(A_j)_{j \in J}$ of positive definite symmetric endomorphisms $A_{j}\in \End(H_{j})$ with $A_{j}\leq  \id_{H_j}$. The localised regularised  Brascamp-Lieb inequality with  test functions $f_{j}=\chi_{A_{j}}$ (definition in \eqref{def-chi-sN}) yields by direct computation
\begin{equation}\label{BLDRT-lwb}
\BL(\sD, \bR, T) \geq \left(\frac{\prod_{j \in J} (\det A_j)^{q_{j}}}{\det\left(T + \sum\nolimits_{j \in J} q_j \ell_j^* A_j \ell_j \right)}\right)^{1/2}.
\end{equation}
The strategy is to choose the $A_{j}$ so that the right-hand side of \eqref{BLDRT-lwb} has the desired lower bound.

By \Cref{lm:arank}, for each $j \in J$, there exists $E_j \in \Gr(H_j)$ of dimension $\dim E_j = \rk_\alpha(\ell_j \mid W)$ such that
\[
\ell_j B^W_1 \subset B^{H_j}_\alpha + E_j.
\]
Let $A_j \in \End(H_j)$ be the positive definite symmetric endomorphism whose square root is
\[
A_j^{1/2} = \alpha \id_{E_j} \oplus \id_{E_j^\perp}.
\]
Observe that $A_j \leq \id_{H_j}$ and $\det(A_j)^{1/2} = \alpha^{\rk_\alpha(\ell_j \mid W)}$. It remains to bound from above the determinant of $T+\sum_{j \in J} q_j \ell_j^* A_j \ell_j$. We assume from the start that $E_{j}$ has the property described in the remark after \Cref{lm:arank}.
In particular, for any unit vector $w \in W$, we can decompose $w = w' + w''$ into orthogonal vectors satisfying $\ell_j w' \in E_j$, $\ell_j w'' \in E_j^\perp$, $\norm{\ell_j w'} \leq C\norm{w'}$ and $\norm{\ell_j w''} \leq \alpha \norm{w''}$.
It follows that
\begin{align*}
\langle \ell_j^* A_j \ell_j w, w \rangle &= \norm{A_j^{1/2} \ell_j w}^2 \\
&= \norm{ \alpha\ell_j w' + \ell_j w''}^2\\
&= \alpha^2 \norm{\ell_j w'}^2 + \norm{\ell_j w''}^2\\
&\leq \alpha^2 C^2\norm{w'}^2 + \alpha^2 \norm{w''}^2\\
&\leq \alpha^2 C^2
\end{align*}
On the other hand, for every unit vector $w_{\perp} \in W^\perp$, we have
\[
\langle \ell_j^* A_j \ell_j w_{\perp}, w_{\perp} \rangle = \norm{A_j^{1/2} \ell_j w_{\perp}}^2 \leq C^2.
\]
Consider now an orthonormal basis $(e_1, \dotsc, e_d)$ of $H$, obtain by joining an orthonormal basis of $W$ with one of $W^\perp$.
Using \Cref{eq:Choleski}, the above inequalities, then $C\geq 1\geq \alpha$, we find
\begin{align*}
\det\left(T + \sum\nolimits_{j \in J} q_j \ell_j^* A_j \ell_j \right) &\leq \prod_{i = 1}^d \left\langle Te_i + \sum\nolimits_{j \in J} q_j \ell_j^* A_j \ell_j e_i, \,e_i \right\rangle\\
&\leq \Big(t + \sum\nolimits_{j \in J} q_j \alpha^2C^2 \Bigr)^{\dim W} \left(t + \sum\nolimits_{j \in J} q_j C^2\right)^{\dim W^\perp}\\
&\leq  \alpha^{2 \dim W}\left( (C/\alpha)^2 t + C^2\sum\nolimits_{j \in J} q_j \right)^d.
\end{align*}
By combination with \eqref{BLDRT-lwb}, this yields the announced lower bound.
\end{proof}

\section{Bounds  on Brascamp-Lieb constants}  \label{Sec-compute-BL}

We derive the estimates on $\BL(\sD)$  from their counterparts for $\BL(\sD, \bbR, T)$.

\subsection{Brascamp-Lieb constants as limits}
It is well known that $\BL(\sD)$ can be expressed as a limit of localised regularised Brascamp-Lieb constants. We recall this standard fact.

 First, we see how to pass to the limit to remove the truncation by $T$. Let $\sD = ((\ell_j)_{j \in J}, (q_j)_{j \in J})$ be  a Brascamp-Lieb datum, and $\bbR = (R_{j})_{j \in J}$ a collection such that each $R_j$ is a positive definite symmetric endomorphism of $H_j$.
Define the regularised Brascamp-Lieb constant $\BL(\sD, \bbR)$ to be $\BL(\sD, \bbR)=\BL(\sD, \bbR, 0)$. More formally, it is the smallest number $\BL(\sD, \bbR)$ such that
\[
\int_{H} \prod_{j \in J} (f_{j}\circ \ell_{j})^{q_{j}} \leq \BL(\sD, \bbR) \prod_{j \in J} \left(\int_{H_{j}} f_{j} \right)^{q_{j}},
\]
for all inputs $(f_{j})_{j \in J}$ of functions satisfying that $f_j$ is of type $R_j$ for each $j \in J$.

\begin{lemma}\label{lm:BLlimT}
Given a regularised Brascamp-Lieb datum $(\sD, \bbR)$, we have
\[
\BL(\sD, \bbR) = \lim_{T \to 0} \BL(\sD, \bbR, T),
\]
where $T \to 0$ means that $T$ converges to $0$ in $\End(H)$ while staying symmetric and positive definite.
\end{lemma}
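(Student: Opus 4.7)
The plan is to sandwich $\BL(\sD,\bbR)$ and $\BL(\sD,\bbR,T)$ using on one side the pointwise bound $\chi_T\leq 1$, and on the other Fatou's lemma applied to the pointwise convergence $\chi_T\to 1$ as $T\to 0$.

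For the easy direction, since $T$ is positive definite, $\chi_T(x)=e^{-\pi\langle x,Tx\rangle}\leq 1$ on $H$. Hence for every collection of inputs $(f_j)_{j\in J}$ with each $f_j$ of type $R_j$, the definition of $\BL(\sD,\bbR)$ gives
\[
\int_{H}\chi_T\prod_{j\in J}(f_j\circ\ell_j)^{q_j} \leq \int_{H}\prod_{j\in J}(f_j\circ\ell_j)^{q_j} \leq \BL(\sD,\bbR)\prod_{j\in J}\Bigl(\int_{H_j}f_j\Bigr)^{q_j}.
\]
This shows $\BL(\sD,\bbR,T)\leq \BL(\sD,\bbR)$ for every positive definite $T$, and therefore $\limsup_{T\to 0}\BL(\sD,\bbR,T)\leq \BL(\sD,\bbR)$.

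For the matching lower bound, fix any sequence $T_n\to 0$ in the cone of positive definite symmetric endomorphisms of $H$, and any admissible input $(f_j)_{j\in J}$. Pointwise on $H$, $\chi_{T_n}\to 1$ by continuity of the exponential, hence $\chi_{T_n}\prod_j(f_j\circ\ell_j)^{q_j}\to \prod_j(f_j\circ\ell_j)^{q_j}$. Fatou's lemma together with the defining inequality of $\BL(\sD,\bbR,T_n)$ yields
\[
\int_{H}\prod_{j\in J}(f_j\circ\ell_j)^{q_j} \leq \liminf_{n}\int_{H}\chi_{T_n}\prod_{j\in J}(f_j\circ\ell_j)^{q_j} \leq \Bigl(\liminf_n\BL(\sD,\bbR,T_n)\Bigr)\prod_{j\in J}\Bigl(\int_{H_j}f_j\Bigr)^{q_j}.
\]
Taking the supremum over admissible inputs and noting that the approximating sequence $(T_n)$ was arbitrary, we obtain $\BL(\sD,\bbR)\leq \liminf_{T\to 0}\BL(\sD,\bbR,T)$, which combined with the previous step gives the announced identity.

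The argument is essentially a routine limit exchange; the only point requiring a moment of care is the possibility $\BL(\sD,\bbR)=+\infty$. In that case, for any level $M>0$ we pick an admissible input nearly saturating the unregularised inequality above $M$, and applying Fatou to this fixed input forces $\liminf_n\BL(\sD,\bbR,T_n)\geq M$, so the limit is also infinite. No further obstacle arises.
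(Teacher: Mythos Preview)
Your proof is correct and is precisely the argument the paper has in mind: the paper's proof reads in full ``This is a simple application of Fatou's Lemma,'' and you have simply spelled out that application, together with the trivial upper bound coming from $\chi_T\leq 1$. The treatment of the $+\infty$ case in your last paragraph is already implicit in the Fatou step and could be omitted, but nothing is wrong.
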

\begin{proof}
This is a simple application of Fatou's Lemma. 
\end{proof}

Next, we get rid of the regularisation.
\begin{lemma}[{\cite[Eq. (44)]{BCCT08}}]
\label{lm:BLlimR}
Given a regularised Brascamp-Lieb datum $(\sD, \bbR)$, we have 
\[
\BL(\sD) = \lim_{t \to +\infty} \BL(\sD, t \bbR),
\]
where $t \bbR$ is the coordinate-wise scalar multiplication of $\bbR$ by $t \in \R_{>0}$.
\end{lemma}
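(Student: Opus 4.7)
The plan is to prove two inequalities, the trivial upper bound $\BL(\sD, t\bbR) \leq \BL(\sD)$ and the matching lower bound $\liminf_{t \to +\infty} \BL(\sD, t\bbR) \geq \BL(\sD)$, which together yield the claim. Any function of type $tR_j$ is a non-negative measurable function on $H_j$, so the inequality defining $\BL(\sD)$ applies to any test collection admissible in the definition of $\BL(\sD, t\bbR)$. This immediately gives $\BL(\sD, t\bbR) \leq \BL(\sD)$ for every $t > 0$, and in particular $\limsup_{t \to +\infty} \BL(\sD, t\bbR) \leq \BL(\sD)$.

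For the matching lower bound, the idea is Gaussian mollification. Given any collection of non-negative, continuous, compactly supported inputs $(f_j)_{j \in J}$, I would define
\[
g_j^{(t)} := \sN_{tR_j} * f_j,
\]
where the right-hand side is the convolution of $\sN_{tR_j}$ with the finite Borel measure $f_j \dd\leb_{H_j}$, so that $g_j^{(t)}$ is of type $tR_j$ by construction. Since $\sN_{tR_j}$ is a probability density, $\int_{H_j} g_j^{(t)} = \int_{H_j} f_j$, and applying the definition of $\BL(\sD, t\bbR)$ to $(g_j^{(t)})_{j\in J}$ yields
\[
\int_H \prod_{j \in J} (g_j^{(t)} \circ \ell_j)^{q_j} \leq \BL(\sD, t\bbR) \prod_{j \in J} \Bigl(\int_{H_j} f_j\Bigr)^{q_j}.
\]
As $t \to +\infty$, the covariance $(2\pi t R_j)^{-1}$ of $\sN_{tR_j}$ tends to $0$, so $\sN_{tR_j}$ acts as a standard approximation of the identity and $g_j^{(t)} \to f_j$ pointwise on $H_j$; consequently $g_j^{(t)} \circ \ell_j \to f_j \circ \ell_j$ pointwise on $H$. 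Fatou's lemma applied to the left-hand side then gives
\[
\int_H \prod_{j \in J} (f_j \circ \ell_j)^{q_j} \leq \Bigl( \liminf_{t \to +\infty} \BL(\sD, t\bbR) \Bigr) \prod_{j \in J} \Bigl(\int_{H_j} f_j\Bigr)^{q_j}.
\]

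Taking the supremum over continuous, compactly supported, non-negative $f_j$ and then extending to arbitrary measurable non-negative inputs by truncation and monotone convergence recovers $\BL(\sD)$ on the left-hand side, so $\BL(\sD) \leq \liminf_{t \to +\infty} \BL(\sD, t\bbR)$. The main technical point, which is otherwise routine, is the justification of the pointwise convergence $g_j^{(t)} \to f_j$ and of the density reduction from measurable to continuous compactly supported inputs; both follow from classical properties of Gaussian approximations of the identity together with standard truncation arguments. The case $\BL(\sD) = +\infty$ is covered as well, since then the left-hand side above can be made arbitrarily large, forcing $\liminf_t \BL(\sD, t\bbR) = +\infty$.
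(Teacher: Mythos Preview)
Your proposal is correct and follows the standard mollification-plus-Fatou route: the paper does not give its own proof of this lemma but simply cites \cite[Eq.~(44)]{BCCT08}, and a commented-out draft proof in the source uses exactly the same strategy (smoothing each $f_j$ by $\sN_{R_j}$, invoking a.e.\ convergence and Fatou). Your choice to restrict first to continuous compactly supported inputs and then pass to general measurable $f_j$ by truncation and monotone convergence is a clean way to handle the pointwise convergence needed before applying Fatou.
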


\subsection{Proof of Theorems \ref{thm:ub}, \ref{thm:lb}}
We can now justify the upper bound  \Cref{thm:ub} and the lower bound \Cref{thm:lb} on Brascamp-Lieb constants
\begin{proof}[Proof of \Cref{thm:ub} ]
Note that being globally critical and $\bba$-perceptive implies in particular that
\[
\sum_{j \in J} q_j \rk_{\alpha_j}(\ell_j) \geq \dim H = \sum_{j \in J} q_j \dim H_j,
\]
hence $\rk_{\alpha_j}(\ell_j) = \dim H_j$ for every $j \in J$.
This allows us to apply \Cref{pr:rlub} with $\beta=0$ and for any regularisation $\bbR$ and localisation $T$.
As $\sA(\sD) = d$ and $\beta=0$, we obtain
\[
\BL(\sD, \bR, T) \leq d^{\frac{d}{2}} \sE(\sD) \Upsilon(\sD) \prod_{j \in J} \alpha_j^{- q_j \dim H_j}.
\]
Note this bound  is independent of $\bbR$ and $T$.
Applying Lemmas \ref{lm:BLlimT}, \ref{lm:BLlimR}, we see this inequality also holds for $\BL(\sD)$, whence the result.
\end{proof}

\begin{proof}[Proof of \Cref{thm:lb}]
It follows from \Cref{thm:loc-lb},
using the trivial inequalities
\[
\BL(\sD) \geq \BL(\sD, \bbR) \geq \BL(\sD, \bbR, T)
\]
for any regularisation $\bbR$ and localisation $T$.
\end{proof}

\section{Visual inequality} \label{Sec-visual-inequality}

We prove the visual inequality announced in the introduction.

\begin{proof}[Proof of \Cref{thm-visual-ineq}]
We may assume that $A$ is a union of balls of radius $\delta$.
Let $\bR=(R_j)_{j \in J}$ by given by $R_j = \delta^{-2} \Id_{H_j}$, and $T=\Id_{H}$. Set  $f_{j}=\1_{\pi_{H_{j}}(A)} * \sN_{R_{j}}$ (it can be seen as a mollification of the projection $\pi_{H_j}(A)$ at scale $\delta$).
Observe that
$$\1_{A} \ll O_{d}(1)^{\sum_{j}q_{j}} \prod_{j \in J} (f_{j}\circ \pi_{H_j})^{q_{j}} \,\chi_{T}.$$
Integrating over $H$, then using the definition of $\BL(\sD, \bbR, T)$ and  that each $f_{j}$ is of  type $R_{j}$,  we obtain
\begin{equation}\label{eq-viseqproof1}
\cN_{\delta}(A)\delta^d \ll_{d} O_{d}(1)^{\sum_{j}q_{j}}\BL(\sD, \bbR, T) \prod_{j \in J}(\cN_{\delta}(\pi_{H_j}A) \delta^{\dim H_j})^{q_{j}}.
 \end{equation}
 Using \Cref{pr:rlub}, we may further  bound
\begin{equation}\label{eq-viseqproof2}
\BL(\sD, \bbR, T) \leq d^{\frac{\sA(\sD)}{2}} \sE(\sD) \prod_{j \in J} \alpha_j^{- q_j \dim H_j} N(\sD, \bbR, T)^{\frac{\sA(\sD)-d+\beta}{2}}.
\end{equation}
Noting that
\[
N(\sD, \bbR, T) \leq
\Bigl(1+\sum\nolimits_{j \in J}q_{j} \delta^{-2}\Bigr)
\]
and $1 \leq \delta^{-2}$, the  last term in \eqref{eq-viseqproof2} is dominated by
\begin{equation}\label{eq-viseqproof3}
N(\sD, \bbR, T)^{\frac{\sA(\sD)-d+\beta}{2}} \leq
\Bigl(1+\sum\nolimits_{j \in J}q_{j}\Bigr)^{\frac{\sA(\sD)-d+\beta}{2}} \delta^{-\sA(\sD)+d-\beta}.
\end{equation}
Combining \eqref{eq-viseqproof1}, \eqref{eq-viseqproof2}, \eqref{eq-viseqproof3}, and cancelling out the terms $\delta^{-\sA(\sD)+d}$, we obtain the inequality announced in \Cref{thm-visual-ineq}.
\end{proof}

\begin{remark}
It follows from this proof that the exponent $\frac{\sA(\sD)-d+\beta}{2}$ appearing in \Cref{pr:rlub} is optimal. Indeed, if \Cref{pr:rlub} were to hold with a smaller exponent, say $\frac{\sA(\sD)-d+\beta- \eps}{2}$ with $\eps>0$, then the above argument would yield \eqref{eq-thm-visual-ineq} with the term $\delta^{-\beta}$ replaced by $\delta^{-\beta+\eps}$, but such an inequality clearly fails as noticed earlier in \S\ref{Sec-appli}.
\end{remark}

\begin{remark}
Instead of considering the $\delta$-covering number of a bounded subset of $\R^d$, we may zoom out and reduce to the $1$-covering number of a dilated set, that is, $\cN_{\delta}(A)=\cN_{1}(\delta^{-1}A)$.
This renormalisation allows to perform the proof of \Cref{thm-visual-ineq} using this time a fixed resolution $\bbR=(\Id_{H_{j}})_{j}$ and increasing the truncation domains, instead of increasing resolution while fixing the truncation. In this case the necessary extra term $\delta^{-\beta}$ in \Cref{thm-visual-ineq} will come from the term $\|T^{-1}\|^{\frac{\beta}{2}}$ instead of $N(\sD, \bbR, T)^{\frac{\sA(\sD)-d+\beta}{2}}$ in \Cref{pr:rlub}.
As in the above remark, this justifies the optimality of the exponent $\beta/2$ in \Cref{pr:rlub}.
\end{remark}

\appendix
\section{On separating geometry and distortion} \label{Sec-appendix}
We provide an example where the variant \eqref{up-v2} of  \Cref{thm:ub} mentioned in the introduction is more efficient than the original upper bound in \Cref{thm:ub}.
\begin{example}
Let $\lambda \in (0,1]$ be a parameter.
Consider the Brascamp-Lieb datum $\sD_\lambda = \bigl((\ell_1, \ell_2, \ell_3), (\frac{1}{2},\frac{1}{2}, \frac{1}{2})\bigr)$ where the $\ell_j \colon \R^3 \to \R^2$ are given by: $\forall x = (x_1, x_2, x_3) \in \R^3$,
\begin{align*}
&\ell_1 (x) = (x_2, x_3) \quad \quad \ell_2 (x) = (x_1, x_3) \quad \quad \ell_3 (x) = (x_1, \lambda x_2).
\end{align*}
For $\lambda = 1$, $\sD_1$ is the Loomis-Whitney datum from $\R^3$ to $\R^2$.
It is  known that $\BL(\sD_1) = 1$ (see e.g. \cite[Example 1.6]{BCCT08}).
For general $\lambda \in \R_{>0}$, a simple change of variables shows that
\[
\BL(\sD_\lambda) =  \lambda^{-\frac{1}{2}} \BL(\sD_1) = \lambda^{-\frac{1}{2}}.
\]
Note that the configuration of the kernels $\Ker \ell_{j}$ from $\sD_\lambda$ is independent of $\lambda$, being that of a Loomis-Whitney scenario.
The variation of $\BL(\sD_\lambda)$ in  $\lambda$ is only due to the distortion of $\ell_{3}$.

We can apply the variant   \eqref{up-v2}  to $\sD_\lambda$.
The distortion term $\Upsilon(\sD_\lambda)$ is precisely $\lambda^{-\frac{1}{2}}$.
Other terms in the upper bound \eqref{up-v2}  are independent of $\lambda$. In fact $(\sD_\lambda)_{\mathrm{proj}}$ is
$(\alpha_1, \alpha_2, \alpha_3)$-perceptive if and only if $\alpha_1^2 + \alpha_2^2 + \alpha_3^2 < 1$.
The  variant \eqref{up-v2} then gives $\BL(\sD_\lambda) \leq 3^3 2^{\frac{3}{2}}\lambda^{-\frac{1}{2}}$. Although not sharp, it captures the correct asymptotic in $\lambda$.

On the contrary, if we use \Cref{thm:ub} directly instead, we obtain the wrong asymptotic.
Indeed, testing the requirement \eqref{def-alpha-percep} with $W = \R(0,1,0)$, we see that for  $\sD_{\lambda}$ to be $(\alpha_1,\alpha_2,\alpha_3)$-perceptive,  we need at least $\alpha_3 < \lambda$.
This results in the upper bound of \Cref{thm:ub} having the term  $\alpha_3^{-q_3 \dim H_3} > \lambda^{-1}$. Hence the claim.

In conclusion,  the  upper bound variant \eqref{up-v2}  for Brascamp-Lieb constants can be more relevant than the original  \Cref{thm:ub} when dealing with linear maps which are far from being projectors.

\bigskip

\end{example}

\bibliographystyle{plain}
\bibliography{effectiveBL}
\end{document}